\author{Komal Agrawal}
\author{Paul Pollack}
\address{Department of Mathematics\\ University of Georgia\\ Athens, GA 30602} 
\email{kpa43240@uga.edu}
\email{pollack@uga.edu}
\title{Finite sets containing near-primitive roots}
\subjclass[2010]{Primary 11A07; Secondary 11N25}
\DeclareMathAlphabet{\curly}{U}{rsfs}{m}{n}
\newtheorem{thm}{Theorem}[section]
\newtheorem{cor}[thm]{Corollary}
\newtheorem{prop}[thm]{Proposition}
\newtheorem{lem}[thm]{Lemma}
\theoremstyle{remark}
\newtheorem*{rmk}{Remark}
\newenvironment{manualtheorem}[1]{%
  \manualtheoreminner
}{\endmanualtheoreminner}
\begin{document}
\renewcommand{\labelenumi}{(\roman{enumi})}
\def\Ll{\mathcal{L}}
\def\N{\mathbb{N}}
\def\Aa{\mathcal{A}}
\def\Q{\mathbb{Q}}
\newcommand\rad{\mathrm{rad}}
\def\Z{\mathbb{Z}}
\def\F{\mathbb{F}}
\def\R{\mathbb{R}}
\def\C{\mathbb{C}}
\def\Pp{\mathcal{P}}
\newcommand\Li{\mathrm{Li}}
\begin{abstract} Fix $a \in \Z$, $a\notin \{0,\pm 1\}$. A simple argument shows that for each $\epsilon > 0$, and almost all (asymptotically 100\% of) primes $p$, the multiplicative order of $a$ modulo $p$ exceeds $p^{\frac12-\epsilon}$. It is an open problem to show the same result with $\frac12$ replaced by any larger constant. We show that if $a,b$ are multiplicatively independent, then for almost all primes $p$, one of $a,b,ab, a^2b, ab^2$ has order exceeding $p^{\frac{1}{2}+\frac{1}{30}}$. The same method allows one to produce, for each $\epsilon > 0$, explicit finite sets $\Aa$ with the property that for almost all primes $p$, some element of $\Aa$ has order exceeding $p^{1-\epsilon}$. Similar results hold for orders modulo general integers $n$ rather than primes $p$.
\end{abstract}

\maketitle

\section{Introduction}
Let $\ell_a(p)$ denote the multiplicative order of the integer $a$ modulo the prime number $p$. A celebrated 1927 conjecture of Artin (\textsf{Artin's primitive root conjecture})  asserts that if $a \in \Z$, with $a$ not a square and $a\ne -1$, then there are infinitely many primes $p$ for which $\ell_a(p) = p-1$. Artin's conjecture remains unresolved, but there has been significant progress. Hooley showed in 1967 that the conjecture is implied by the Riemann Hypothesis for a certain class of Dedekind zeta functions \cite{hooley67}, while Heath-Brown \cite{HB86} (building on earlier work of Gupta and Murty \cite{GM84}) showed in 1986 that Artin's conjecture holds for all prime values of $a$ with at most two exceptions.

Hooley's ideas in \cite{hooley67} have several other nice consequences for the distribution of the numbers $\ell_a(p)$. Of interest to us is the following example: Assume the same Generalized Riemann Hypothesis as in \cite{hooley67}, and fix an integer $a\notin \{0,\pm 1\}$. If $\psi(x)$ is any function of $x$ tending to infinity as $x\to\infty$, then $\ell_a(p) > p/\psi(p)$ for almost all primes $p\le x$, meaning all  $p\le x$ with at most $o(x/\log{x})$ exceptions. (See \cite[Theorem 23]{KP05} for a quantitatively precise statement along these lines.) Thus, loosely speaking, $a$ is `nearly' a primitive root mod $p$ for almost all primes $p$.

The known unconditional results in this direction are considerably weaker. The following easy observation is implicit in \cite{hooley67} (see p.\ 212 there). Fix an integer $a\notin \{0,\pm 1\}$. Then for each $y\ge 1$, the product $\prod_{n \le y} (a^n-1)$ has absolute value $\exp(O(y^2))$, and so has $O(y^2)$ distinct prime factors. But that product is divisible by all primes $p$ with $\ell_a(p) \le y$. Taking $y=x^{1/2}/\log{x}$, we deduce that all but $o(x/\log{x})$ primes $p\le x$  have $$\ell_a(p) > x^{1/2}/\log{x} > p^{1/2}/\log{p}.$$

In \cite{erdos76}, Erd\H{o}s proves by a substantially more intricate argument that $\ell_a(p) > p^{1/2}$ for almost all primes $p$. At the end of the same article, he claims that if $\epsilon(x)$ is any positive-valued function tending to $0$ as $x\to\infty$, then \begin{equation}\label{eq:epsilonimprovement} \ell_a(p) > p^{\frac{1}{2}+\epsilon(p)} \end{equation} for almost all primes $p$. The proof appeared 20 years later in a joint a paper with Ram Murty \cite{EM99}.  It seems that \eqref{eq:epsilonimprovement} still holds the record as far as a lower bound for $\ell_a(p)$ that holds almost always, and that a new idea will be required to replace  $\frac{1}{2}$ with $\frac{1}{2}+\delta$ for some $\delta\gg 1$. The purpose of this article is to observe that, by considering simultaneously multiple values of $a$, one can overcome this barrier.

In our first theorem, we produce sets of $5$ integers, at least one of which almost always has order
$$ \gtrapprox p^{\frac{8}{15}} = p^{\frac{1}{2} + \frac{1}{30}}. $$ Recall that $a_1,\dots,a_k \in \Q^{\times}$ are called \textsf{multiplicatively independent} if whenever $a_1^{e_1} \cdots a_k^{e_k}=1$ with integers $e_1,\dots,e_k$, we have $e_1=e_2=\dots=e_k=0$.

\begin{thm}\label{thm:basic} Let $a$ and $b$ be nonzero integers that are multiplicatively independent. For almost all primes $p$, at least one of $a$, $b$, $ab$, $a^2 b$, and $ab^2$ has multiplicative order exceeding
\begin{equation}\label{eq:basically815} p^{8/15}/\exp(2\sqrt{\log{p}}). \end{equation}
\end{thm}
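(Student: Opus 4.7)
The plan is to show that $B_x$, the set of primes $p \le x$ for which all five of $a, b, ab, a^2b, ab^2$ have multiplicative order at most $y_p := p^{8/15}/\exp(2\sqrt{\log p})$, has size $o(x/\log x)$.

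\emph{Step 1: bounding the subgroup size.} For $p \in B_x$, the subgroup $H := \langle a, b\rangle \subset (\Z/p\Z)^\times$ is cyclic of order $N := \mathrm{lcm}(\ell_a(p), \ell_b(p))$. Choosing a generator $h$ of $H$ and writing $a \equiv h^u$, $b \equiv h^v \pmod{p}$, one has $\gcd(u, v, N) = 1$ (since $a, b$ generate $H$), and $\ell_{a^i b^j}(p) = N/\gcd(iu + jv, N)$. The hypothesis thus reads $d_k \ge N/y_p$ for $k = 1, \dots, 5$, where $d_k := \gcd(i_k u + j_k v, N)$ and $(i_k, j_k)$ runs over $\{(1, 0), (0, 1), (1, 1), (2, 1), (1, 2)\}$. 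The key arithmetic observation is: for any two of these vectors $w_i, w_j$, $\gcd(d_i, d_j)$ divides $\det(w_i, w_j)$. (If a prime power $q^\ell$ divides $d_i$ and $d_j$, then $q^\ell$ divides $i_1 u + j_1 v$ and $i_2 u + j_2 v$, hence $q^\ell \mid \det \cdot u$ and $q^\ell \mid \det \cdot v$; since $q \mid N$ together with $\gcd(u, v, N) = 1$ forces $q \nmid \gcd(u, v)$, we conclude $q^\ell \mid \det$.) All ten pairwise determinants have absolute value in $\{1, 2, 3\}$: $|\det| = 2$ only for $\{w_1, w_5\}$ and $\{w_2, w_4\}$, and $|\det| = 3$ only for $\{w_4, w_5\}$. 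A prime-by-prime inspection of $\prod_i d_i / \mathrm{lcm}(d_i) = \prod_q q^{\sum_i v_q(d_i) - \max_i v_q(d_i)}$ shows that only $q = 2$ and $q = 3$ can contribute, each at most a factor $q^1$. Hence $\prod_i d_i \le 6 \cdot \mathrm{lcm}(d_i) \le 6N$, and combining with $d_k \ge N/y_p$ yields
\[
(N/y_p)^5 \le 6 N, \quad \text{i.e.,} \quad N \le 6^{1/4} y_p^{5/4}.
\]

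\emph{Step 2: Minkowski and prime-factor counting.} The sublattice $\Lambda_p := \{(s, t) \in \Z^2 : a^s b^t \equiv 1 \pmod{p}\}$ has index $N$ in $\Z^2$. By Minkowski's convex body theorem applied to the box $[-\sqrt{N}, \sqrt{N}]^2$, $\Lambda_p$ contains a nonzero $(s, t)$ with $\max(|s|, |t|) \le \sqrt{N}$. For such a vector, $p$ divides the integer $V(s, t) := a^{s^+} b^{t^+} - a^{s^-} b^{t^-}$ (where $s^{\pm} := \max(\pm s, 0)$), which is nonzero by the multiplicative independence of $a, b$. Since $|V(s, t)| \le 2 A^{|s| + |t|}$ with $A := \max(|a|, |b|)$, we have $\Omega(V(s, t)) \le (|s| + |t|) \log_2 A + O(1)$. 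Thus, for any uniform upper bound $M \ge \sqrt{N_p}$,
\[
\#\{p \in B_x : \sqrt{N_p} \le M\} \le \sum_{\substack{(s, t) \ne (0, 0) \\ |s|, |t| \le M}} \Omega(V(s, t)) \ll_{a, b} \sum_{|s|, |t| \le M} (|s| + |t|) \ll M^3.
\]

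\emph{Step 3: closing.} Primes $p \le x/\log^2 x$ contribute $\pi(x/\log^2 x) = O(x/\log^3 x) = o(x/\log x)$ by Chebyshev. For $p \in (x/\log^2 x, x]$ we have $y_p \le y^* := x^{8/15}/\exp((2 - o(1))\sqrt{\log x})$, so taking $M := 6^{1/8} (y^*)^{5/8}$ in Step 2 gives
\[
\#(B_x \cap (x/\log^2 x, x]) \ll M^3 \ll (y^*)^{15/8} \ll \frac{x}{\exp((15/4 - o(1))\sqrt{\log x})} = o(x/\log x),
\]
since $\sqrt{\log x}$ grows faster than $\log \log x$. This completes the proof. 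The decisive step---and the main obstacle to overcome---is Step 1: the elements $a, b, ab, a^2b, ab^2$ are chosen precisely so that all ten pairwise exponent-vector determinants are uniformly bounded (by $3$ here), which drives the critical exponent $5/4$ in $N \ll y_p^{5/4}$. With only $a, b$ the analogous bound is merely $N \le y^2$, and Step 2 would yield only $\ell > p^{1/3}$ almost everywhere---weaker even than the classical unconditional $p^{1/2}$ bound for a single element; the three ``mixed'' elements $ab, a^2b, ab^2$ are exactly what push the almost-everywhere exponent past $1/2$.
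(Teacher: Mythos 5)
Your proof is correct, and it takes a genuinely different route from the paper's. The paper first discards primes $p$ for which $p-1$ has a large $\log\log x$-smooth part or a non-squarefree rough part (via Hall--Tenenbaum and Brun--Titchmarsh), then argues prime-by-prime on the rough part of $p-1$: each large prime $q$ dividing $\ell_a(p)\ell_b(p)$ must divide at least four of the five orders (using that $g$ has order divisible by $q$ iff $g$ is not a $q$th power, which needs $q \parallel p-1$), and finally invokes Matthews' bound $\ell_{a,b}(p) > p^{2/3}/\log p$ as a black box. You replace all of this with an exact valuation-theoretic identity: writing $\ell_{a^ib^j}(p) = N/\gcd(iu+jv,N)$ and observing that $\gcd(d_i,d_j)$ divides the pairwise exponent-vector determinant, you get the clean inequality $\prod_k d_k \le 6\,\mathrm{lcm}_k(d_k) \le 6N$, hence $\prod_k \ell_{a^{i_k}b^{j_k}}(p) \ge N^4/6$ with no conditions whatsoever on the factorization of $p-1$. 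This handles prime powers uniformly and so dispenses entirely with the smoothness and squarefreeness preprocessing; you also reprove the $k=2$ case of Matthews' lemma from scratch via Minkowski rather than citing it. The underlying combinatorial content is the same---both arguments exploit that the exponent vectors $(1,0),(0,1),(1,1),(2,1),(1,2)$ have pairwise determinants supported only on $\{1,2,3\}$, which is what produces the exponent $4/5$ and hence $\frac{2}{3}\cdot\frac{4}{5} = \frac{8}{15}$---but your packaging is sharper (a fully explicit constant $6^{1/4}$ in place of an $\exp(\sqrt{\log x})$ loss from the smooth part) and more self-contained. The only points worth making explicit are the finitely many primes dividing $ab$, which you should discard at the outset, and the fact that you are using the closed (compact-body) form of Minkowski's theorem, since your box has volume exactly $2^2\det\Lambda_p$; neither affects correctness.
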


It would be easy to prove Theorem \ref{thm:basic} with a somewhat smaller quantity in the denominator of \eqref{eq:basically815}, but this is of no importance. Fix $\delta \in (0,1)$. Then as $\epsilon\to 0$, the lower density of primes $p$ (relative to the full set of primes) for which
\begin{equation*}
\text{$p-1$ has no divisor in the interval $[p^{\delta-\epsilon}, p^{\delta+\epsilon}]$}
\end{equation*}
tends to $1$. (This is essentially Erd\H{o}s and Murty's Theorem 2 in \cite{EM99}. It also follows from Proposition \ref{prop:ford} below.) From this, we deduce immediately that Theorem \ref{thm:basic}, with any function of size $p^{o(1)}$ in the denominator of \eqref{eq:basically815}, can be bootstrapped to yield the following result.

\begin{manualtheorem}{1$'$}\label{thm:basic2} Let $\epsilon(x)$ be a positive-valued function of $x\ge 2$ that tends to $0$ as $x\to\infty$. Let $a$ and $b$ be nonzero integers that are multiplicatively independent. Then for almost all primes $p$, at least one of $a$, $b$, $ab$, $a^2 b$, and $ab^2$ has multiplicative order exceeding
\begin{equation*} p^{\frac{8}{15}+\epsilon(p)}. \end{equation*}
\end{manualtheorem}

Matthews proved in 1982 that if $a$ and $b$ are multiplicatively independent integers, then the subgroup of ${\F_p}^{\times}$ generated by $a$ and $b$ has size at least $p^{2/3}/\log{p}$ for almost all primes $p$. Of course, $\frac{2}{3} > \frac{8}{15}$. What is novel about Theorem \ref{thm:basic} (and Theorem \ref{thm:basic2}) is that we can pinpoint an \emph{explicit, finite} subset of  $\langle a,b\rangle$ one of whose elements almost always generates a subgroup of size substantially larger than $p^{1/2}$.

In fact, Matthews's result implies that if $a_1,\dots,a_k$ is any finite list of multiplicatively independent integers, then $\langle a_1,\dots,a_k\rangle$  almost always has size at least $p^{1-\frac{1}{k+1}}/\log{p}$. We prove the following.

\begin{thm}\label{thm:general} Let $a_1,\dots,a_k$ be nonzero, multiplicatively independent integers. Let $N$ be a positive integer, and let
\[ \Aa = \{a_1^{e_1} a_2^{e_2} \cdots a_k^{e_k}: \text{each }0 \le e_i < N, \text{not all $e_i=0$}\}. \]
For almost all primes $p$, there is an $a\in \Aa$ with
\[ \ell_a(p) > p^{\delta}, \quad\text{where}\quad \delta = \left(1-\frac{1}{k+1}\right)\left(1-\frac{1}{N}\right). \]
\end{thm}

\noindent We highlight two immediate consequences of Theorem \ref{thm:general}:
\begin{enumerate}\item[(a)] For each $\epsilon \in (0,1)$, there are finite sets $\Aa$ of size $\exp(O(\frac{1}{\epsilon}\log\frac{1}{\epsilon}))$ such that, for almost all primes $p$, some $a \in \Aa$ has order exceeding $p^{1-\epsilon}$.
\item[(b)] Let $\xi(x)$ be any function that tends to infinity as $x\to\infty$. For each fixed $\epsilon > 0$, and almost all primes $p$, we have $L(p) > p^{1-\epsilon}$, where $L(p)$ is the maximum order mod $p$ of any of $1,2,3,\dots,\lfloor \xi(p)\rfloor$. 
\end{enumerate}

Ska\l{}ba has conjectured that almost all primes $p$ have a positive multiple of the form $2^m+2^n+1$, with positive integers $m,n$. He proved this assuming $\ell_2(p) > p^{0.8}$ for almost all primes $p$ \cite{skalba04}. (Thus the conjecture becomes a theorem if we assume GRH.) Elsholtz has shown, unconditionally, that Ska\l{b}a's  conjecture holds with $2^m + 2^n+1$ replaced by $2^{m_1} + 2^{m_2} + \dots + 2^{m_6} + 1$ \cite{elsholtz16}. Our proof of Theorem \ref{thm:general} yields another unconditional variant of Ska\l{b}a's conjecture.

\begin{cor}\label{cor:skalbavariant} Let $A = (2\cdot 3\cdot 5\cdot 7\cdot 11)^{9}$. For almost all primes $p$, there are positive integers $m,n$, such that $p$ divides $\prod_{a \mid A,\,a>1} (a^m + a^n + 1)$.
\end{cor}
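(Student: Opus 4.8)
The plan is to deduce the corollary from Theorem~\ref{thm:general}, applied with $k = 5$, with $a_1,\dots,a_5$ taken to be the primes $2,3,5,7,11$ (which are multiplicatively independent, being distinct primes), and with $N = 10$. For these choices $\delta = (1-\tfrac16)(1-\tfrac1{10}) = \tfrac34$, and (since $A = (2\cdot 3\cdot 5\cdot 7\cdot 11)^9$) the set $\Aa$ appearing in that theorem is exactly $\{a : a \mid A,\ a > 1\}$. So Theorem~\ref{thm:general} will supply, for almost all primes $p$, a divisor $a \mid A$ with $a > 1$ and $t := \ell_a(p) > p^{3/4}$. Put $H = \langle a \bmod p\rangle$, a subgroup of $\F_p^{\times}$ of order $t$ and index $d := (p-1)/t < p^{1/4}$. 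It then suffices to show that $-1$ lies in the sumset $H + H$: given $a^i + a^j \equiv -1 \pmod p$ with integers $i,j \ge 0$, replacing $i$ by $i+t$ and/or $j$ by $j+t$ (which changes nothing modulo $p$) makes both exponents positive, so that $p \mid a^m + a^n + 1$ for some $m,n \ge 1$, and hence $p \mid \prod_{a' \mid A,\ a' > 1}(a'^m + a'^n + 1)$.

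To locate $-1$ in $H + H$ I would use the classical character-sum estimate for the number $N_H(c)$ of pairs $(x,y)\in H\times H$ with $x+y=c$, valid for $c \in \F_p^\times$. Writing the indicator function of $H$ as $\tfrac1d$ times the sum of the $d$ multiplicative characters of $\F_p^\times$ that are trivial on $H$, and evaluating the resulting double sum through Jacobi sums (each of modulus $\sqrt p$ whenever the characters involved, together with their product, are all nontrivial), one arrives at
\[ \Bigl|\, N_H(c) - \frac{p-2}{d^2}\, \Bigr| \ \le\ \frac{(d-1)(d-2)\sqrt p + 3(d-1)}{d^2}. \]
Since $d < p^{1/4}$, the main term $\tfrac{p-2}{d^2}$ need only exceed $\sqrt p$ by a little, while the error term can be nearly as large, so this estimate by itself is not quite decisive. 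The fix is to bring in the scarcity of divisors in short intervals: as noted just after Theorem~\ref{thm:basic}, and as follows from Proposition~\ref{prop:ford}, for almost all primes $p$ the number $p-1$ has no divisor in $[p^{1/4}/\log p,\ p^{1/4}]$. For such $p$ the divisor $d = (p-1)/t$ of $p - 1$, being $< p^{1/4}$, must in fact satisfy $d < p^{1/4}/\log p$; hence $(d-1)(d-2)\sqrt p < d^2\sqrt p < p/(\log p)^2$, and
\[ N_H(-1) \ \ge\ \frac{p - 2 - 3(d-1) - (d-1)(d-2)\sqrt p}{d^2} \ >\ \frac{p\bigl(1-(\log p)^{-2}\bigr) - 3p^{1/4} - 2}{d^2} \ >\ 0 \]
once $p$ is large. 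Thus $-1 \in H + H$, completing the argument.

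The one genuinely delicate point is that Theorem~\ref{thm:general} lands us exactly at $\delta = \tfrac34$, i.e.\ exactly at the Weil barrier for solving $x+y=-1$ with $x,y$ in a subgroup of $\F_p^\times$, so there is no slack in the character-sum bound. What rescues the argument is the divisor-distribution input, which upgrades the trivial bound $d < p^{1/4}$ to $d < p^{1/4}/\log p$ for almost all $p$; any statement of comparable strength (such as Proposition~\ref{prop:ford}) would serve equally well, and one may use any quantity of size $p^{o(1)}$ in place of $\log p$. The remaining ingredients are routine: the multiplicative independence of $2,3,5,7,11$, the identification of $\Aa$ with the set of nontrivial divisors of $A$, the Jacobi-sum estimate itself, and the passage from exponents $\ge 0$ to exponents $\ge 1$.
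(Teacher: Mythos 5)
Your proposal is correct, and its skeleton matches the paper's: apply Theorem \ref{thm:general} with $k=5$, $a_1,\dots,a_5 = 2,3,5,7,11$, $N=10$ (so $\Aa$ is exactly the set of divisors $a>1$ of $A$ and $\delta = \frac34$), then solve $x+y=-1$ with $x,y$ in the subgroup $H=\langle a\rangle$ via Jacobi sums; your character-sum paragraph is essentially a proof of the paper's Lemma \ref{lem:skalba} (Ska\l{}ba's lemma), which the paper cites and reproves for completeness. The one genuine divergence is your claim that the Weil bound is ``not quite decisive'' at the threshold $d<p^{1/4}$ and your consequent importation of Proposition \ref{prop:ford} to upgrade this to $d<p^{1/4}/\log p$. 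That extra step is legitimate (the paper applies Proposition \ref{prop:ford} with comparably shrinking intervals in Lemma \ref{lem:piVbound}), but it is unnecessary: in your own estimate the error satisfies $(d-1)(d-2)\sqrt p + 3(d-1) < d^2\sqrt p - (3d-2)\sqrt{p}+3(d-1)< d^2\sqrt{p} < p$ once $d<p^{1/4}$, so $N_H(-1)>0$ already; this cross-term slack $\bigl(d^2-(d-1)^2\bigr)\sqrt p = (2d-1)\sqrt p$ is exactly what the paper's Lemma \ref{lem:skalba} uses to absorb the $\le 2d$ boundary solutions. The paper instead secures its room above $p^{3/4}$ by noting that its proof of Theorem \ref{thm:general} actually yields the exponent $\delta'>\delta=\frac34$ of \eqref{eq:deltaprimedef}, whereas you treat Theorem \ref{thm:general} as a black box (its stated strict inequality $\ell_a(p)>p^{3/4}$ already suffices for Lemma \ref{lem:skalba}) and buy your room from divisor statistics. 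So: same core idea, with your version trading an inspection of the proof of Theorem \ref{thm:general} for an extra (and, as it happens, dispensable) appeal to Ford's theorem.
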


Our method also has implications for orders modulo $n$ when $n$ is composite. For each integer $a$ and each positive integer $n$, the sequence $a, a^2, a^3, \dots$ is eventually periodic modulo $n$. Let $\ell_a(n)$ denote the length of the period. When $a$ and $n$ are coprime, $\ell_a(n)$ is the order of $a$ modulo $n$, while in general, $\ell_a(n)$ is the order of $a$ modulo $n'$, where $n'$ is the largest positive divisor of $n$ coprime to $a$. We show the following analogue of Theorem 1$'$.

\begin{thm}\label{thm:composite} Let $\epsilon(x)$ be a positive-valued function of $x\ge 1$ that tends to $0$ as $x\to\infty$. Let $a$ and $b$ be nonzero integers that are multiplicatively independent. Then for almost all natural numbers $n$ {\rm (}meaning all but $o(x)$ integers $n\le x$, as $x\to\infty${\rm )},
\begin{equation*} \max\{\ell_a(n), \ell_b(n), \ell_{ab}(n), \ell_{a^2 b}(n), \ell_{ab^2}(n)\} > n^{\frac{8}{15}+\epsilon(n)}. \end{equation*}
\end{thm}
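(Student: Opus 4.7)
The plan is to parallel the bootstrap from Theorem \ref{thm:basic} to Theorem 1$'$, with the Carmichael function $\lambda(n)$ playing the role of $p-1$. Two ingredients are needed. \textbf{(A)} An analog of Theorem \ref{thm:basic} for integer moduli: for almost all $n$, at least one of $\ell_a(n), \ell_b(n), \ell_{ab}(n), \ell_{a^2b}(n), \ell_{ab^2}(n)$ exceeds $n^{8/15}/\exp(2\sqrt{\log n})$. \textbf{(B)} An analog of Proposition \ref{prop:ford}: for each fixed $\delta\in(0,1)$, the density of $n\le x$ for which $\lambda(n)$ has a divisor in $[n^{\delta-\epsilon},n^{\delta+\epsilon}]$ tends to zero as $\epsilon\to 0$. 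Granted these, Theorem \ref{thm:composite} will follow just as Theorem 1$'$ followed from Theorem \ref{thm:basic} and Proposition \ref{prop:ford}: the maximum of the five orders divides $\lambda(n)$, exceeds $n^{8/15-o(1)}$ by (A), and avoids the forbidden window by (B), so it exceeds $n^{8/15+\epsilon(n)}$ for a suitable function $\epsilon(n)\to 0$.

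For (A), I first reduce to the case $\gcd(n,ab)=1$ by passing to $n''$, the largest divisor of $n$ coprime to $ab$: since $\ell_c(n)\ge\ell_c(n'')$ for each of the five elements $c$, and for almost all $n\le x$ we have $n/n''=(\log x)^{o(1)}$ (the set of $n\le x$ with $n/n''>Y$ has size $\ll x\sum_{q>Y,\,q\mid (ab)^{\infty}} 1/q=o(x)$ as $Y\to\infty$), it suffices to establish the statement with $n''$ in place of $n$. I then revisit the proof of Theorem \ref{thm:basic}, whose heart is a counting bound on the primes $p\le x$ for which all five orders are small. The analogous counting, with the sum over primes $p\le x$ replaced by the sum over $ab$-coprime integers $n\le x$ and with $p-1$ replaced by $\lambda(n)$, should yield the same quantitative bound; here I use the Erd\H{o}s--Pomerance--Schmutz fact $\lambda(n)=n^{1-o(1)}$ (valid for almost all $n$) to align the exponents.

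For (B), I extend Ford's method for divisors of shifted primes in short intervals to $\lambda(n)$: for almost all $n$, the prime-power divisors of $\lambda(n)$ come from the values $p-1$ as $p$ ranges over prime divisors of $n$, so divisors of $\lambda(n)$ near $n^\delta$ are essentially multiplicative combinations of divisors of these $p-1$, and Ford's methodology bounds the density of the exceptional set. I expect the main obstacle to be in (A): the counting inside Theorem \ref{thm:basic}'s proof is set in the cyclic group $\F_p^\times$, whereas $(\Z/n)^\times$ is in general only a product of cyclic groups, so the argument yielding the $8/15$ exponent must be re-examined when the five exponent vectors $(1,0),(0,1),(1,1),(2,1),(1,2)$ are interpreted in a non-cyclic group. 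The likely remedy is to project to the cyclic quotient of $(\Z/n)^\times$ of order $\lambda(n)$, where each $\ell_c(n'')$ is bounded below by the order of the projected image and the lattice-theoretic computation proceeds exactly as in Theorem \ref{thm:basic}.
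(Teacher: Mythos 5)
Your strategy---first prove an analogue of Theorem \ref{thm:basic} for composite moduli and then bootstrap past $\tfrac{8}{15}$ by forbidding divisors of $\lambda(n)$ near $n^{8/15}$---breaks down at step (B), and this is precisely why the paper does not argue this way. Ford's Proposition \ref{prop:ford} controls divisors of the shifted primes $p-1$; there is no analogous statement for $\lambda(n)$, and your (B) is almost certainly false. By the Erd\H{o}s--Pomerance theorem on the normal number of prime factors of $\varphi(n)$, a typical $\lambda(n)$ has about $\tfrac12(\log\log n)^2$ distinct prime factors and hence at least $2^{(\frac12+o(1))(\log\log n)^2}$ divisors---far more than $\log n$---so its divisors are so densely packed on a logarithmic scale that one expects $\lambda(n)$ to possess a divisor in $[n^{\delta-\epsilon},n^{\delta+\epsilon}]$ for \emph{almost all} $n$ and \emph{every} fixed $\epsilon>0$. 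The paper's remedy is to apply Ford's theorem to the individual shifted primes $p-1$ for $p\mid n$ rather than to $\lambda(n)$: the primes dividing $n$ are split into a sparse class $V$ where $\ell_{a,b}(p)$ falls in the narrow window $(p^{2/3}/\log p,\,p^{2/3+5\epsilon(p)}]$ (sparse by Proposition \ref{prop:ford}), and a class $W$ where $\ell_{a,b}(p)>p^{2/3+5\epsilon(p)}$; one then shows $n_W\ge x^{3/10}$ for almost all $n$, so the $\epsilon$-gain in the exponent is harvested prime by prime and accumulates in the product over $p\mid n_W$.

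The second gap is in (A): you never supply the mechanism that converts lower bounds on the $\ell_c(p)$ for $p\mid n$ into a lower bound on $\ell_c(n)$. The paper's tool is the Kurlberg--Rudnick inequality (Proposition \ref{prop:KR}), $\ell_a(n)\ge\frac{\lambda(n)}{n}\prod_{p\mid n,\,p\nmid a}\ell_a(p)$, combined with $\lambda(n)>n\exp(-(\log\log n)^3)$ for almost all $n$ (Lemma \ref{lem:lambda}); this is what lets the entire argument of Theorem \ref{thm:basic} be run inside each $\F_p^{\times}$, where the group is cyclic, and then multiplied up. Your proposed substitute---projecting to a cyclic quotient of $(\Z/n\Z)^{\times}$ of order $\lambda(n)$---does not work: while $\ell_c(n)$ is indeed at least the order of the image of $c$ in such a quotient, there is no reason the images of $a$ and $b$ retain large order there (they could even lie in the kernel), and no analogue of Lemma \ref{lem:matthews} is available for such a quotient. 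Likewise, your assertion that the counting underlying Theorem \ref{thm:basic} transfers from primes $p$ to integers $n$ ``with the same quantitative bound'' is unsupported: Lemma \ref{lem:matthews} rests on bounding the number of prime factors of $\prod(a^{e}b^{f}-1)$, and counting composite $n$ for which $\langle a,b\rangle$ is small modulo $n$ is a genuinely different (and harder) problem, which the paper deliberately avoids by reducing to primes via Proposition \ref{prop:KR}.
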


The obvious analogue of Theorem \ref{thm:general} also holds; see
 the remarks following our proof of Theorem \ref{thm:composite} for a somewhat more precise result.

\subsection*{Notation} Throughout, the letters $p$ and $q$ are reserved for primes.

\section{Elements of order $\gtrapprox p^{8/15}$: Proof of Theorem \ref{thm:basic}}

For a prime $p$ and integers $a_1,\dots,a_k$ not divisible by $p$, we let $\ell_{a_1,\dots,a_k}(p)$ denote the order of the subgroup of $\F_p^{\times}$ generated by $a_1,\dots,a_k$. The following lemma is a special case of a result of Matthews \cite{matthews82} (alluded to in the introduction), and also of Lemma 1 of Murty and Srinivasan's paper \cite{MS87}.

\begin{lem}\label{lem:matthews} Let $a_1, \dots, a_k$ be nonzero, multiplicatively independent integers. There is a positive constant $C = C_{a_1,\dots,a_k}$ such that, for every $y \ge 1$, the number of primes $p$ with  $\ell_{a_1,\dots,a_k}(p) \le y$ does not exceed $C y^{1+1/k}$.
\end{lem}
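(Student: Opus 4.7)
The approach rests on two elementary ingredients: a pigeonhole argument in the subgroup of $\F_p^\times$ generated by the $a_i$'s, and the observation that a nonzero integer $n$ has at most $O(\log|n|)$ distinct prime factors. This is essentially the same strategy already used in the $k=1$ case in Hooley's calculation quoted in the introduction.

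First I would set $M = \lfloor y^{1/k}\rfloor + 1$, so that $M^{k} > y$. Fix a prime $p$ not dividing $a_{1} \cdots a_{k}$ with $\ell_{a_1,\dots,a_k}(p) \le y$, and consider the $M^{k}$ products $a_1^{e_1}\cdots a_k^{e_k}$ as $(e_1,\dots,e_k)$ ranges over tuples with $0 \le e_{i} \le M-1$. Each such product reduces mod $p$ to an element of a subgroup of order at most $y < M^{k}$, so pigeonhole forces two distinct exponent tuples to collide. Subtracting, there must exist a nontrivial integer tuple $(f_1,\dots,f_k)$ with $|f_i| < M$ for which
\[
a_1^{f_1} a_2^{f_2} \cdots a_k^{f_k} \equiv 1 \pmod{p}.
\]

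Next I would associate to $(f_1,\dots,f_k)$ the integer
\[
N(f_1,\dots,f_k) \;=\; \prod_{f_i \ge 0} a_i^{f_i} \;-\; \prod_{f_i < 0} a_i^{-f_i}.
\]
The multiplicative independence of $a_1,\dots,a_k$ over $\Q^\times$ is invoked precisely here, and only here, to guarantee $N(f_1,\dots,f_k) \neq 0$. Clearing denominators in the displayed congruence shows $p \mid N(f_1,\dots,f_k)$. Setting $A = \max_{i} |a_i|$, the crude estimate $|N(f_1,\dots,f_k)| \le 2 A^{kM}$ gives $\log|N(f_1,\dots,f_k)| = O(M)$ with an implied constant depending only on $a_1,\dots,a_k$, so each $N(f_1,\dots,f_k)$ contributes at most $O(M)$ distinct prime divisors.

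Finally I would sum: the number of nontrivial tuples $(f_1,\dots,f_k)$ with $|f_i| < M$ is at most $(2M-1)^{k} = O(M^{k})$, and together with the $O(1)$ primes dividing $a_1 \cdots a_k$ this yields
\[
\#\{p : \ell_{a_1,\dots,a_k}(p) \le y\} \;\le\; O(M) \cdot O(M^{k}) + O(1) \;=\; O(M^{k+1}) \;=\; O\!\left(y^{1+1/k}\right),
\]
as desired. I do not anticipate any serious obstacle: the argument is classical, and the only step worth double-checking is that multiplicative independence is genuinely needed (and sufficient) to rule out $N(f_1,\dots,f_k) = 0$, which it clearly is, since $\prod a_i^{f_i} = 1$ in $\Q$ would force $(f_1,\dots,f_k) = 0$.
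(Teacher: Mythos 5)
Your argument is correct: the pigeonhole step produces a nontrivial relation $a_1^{f_1}\cdots a_k^{f_k}\equiv 1 \pmod p$ with $|f_i|<M$, multiplicative independence guarantees the associated integer $N(f_1,\dots,f_k)$ is nonzero, and the divisor-counting bound $O(M)\cdot O(M^k)=O(y^{1+1/k})$ goes through. The paper does not prove this lemma at all --- it cites it as a special case of Matthews and of Lemma 1 of Murty--Srinivasan --- and your proof is precisely the standard argument from those sources, the natural $k$-variable generalization of the $k=1$ computation sketched in the paper's introduction.
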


Recall that a number $n$ is said to be \textsf{$z$-smooth} (or \textsf{$z$-friable}) if none of its prime factors exceed $z$. The next estimate appears as Theorem 07 on p.\ 4 of Hall and Tenenbaum's monograph \cite{HT88}.

\begin{lem}\label{lem:HT} For certain positive constants $c_1, c_2$ and all real numbers $x\ge y\ge z \ge 2$,  the number of $n\le x$ having a $z$-smooth divisor of size least $y$ is at most
\[ c_1 x \exp(-c_2 \log{y}/\log{z}). \]
\end{lem}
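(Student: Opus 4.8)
The plan is to establish Lemma~\ref{lem:HT} by Rankin's method---this being essentially the proof behind Hall and Tenenbaum's Theorem 07. Write $N(x,y,z)$ for the quantity to be bounded, let $P^{\pm}$ denote the largest/smallest prime factor, and for real $t$ set $\Phi(t,z)=\#\{m\le t:P^-(m)>z\}$ and $\Psi(t,z)=\#\{m\le t:P^+(m)\le z\}$. The starting point is a clean reduction: each positive integer $n$ factors uniquely as $n=dm$ with $d=\prod_{p\le z}p^{v_p(n)}$ its ``$z$-smooth part'' and $m$ its complementary ``$z$-rough part''; since $d$ is the \emph{largest} $z$-smooth divisor of $n$, the integer $n$ has a $z$-smooth divisor exceeding $y$ exactly when $d>y$. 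Hence
\[
N(x,y,z)\;=\;\sum_{\substack{d>y\\ P^+(d)\le z}}\Phi(x/d,\,z).
\]

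Into this I would feed two standard inputs. First, a sieve bound: $\Phi(t,z)\ll t/\log z$ for all $t\ge z$ (for $z\le t<z^2$ this is merely $\Phi(t,z)=1+\pi(t)-\pi(z)$, and for $t\ge z^2$ it is the fundamental lemma of sieve theory), while trivially $\Phi(t,z)=1$ for $1\le t<z$; combining, $\Phi(t,z)\ll t/\log z+1$ for every $t\ge 1$. Second, Mertens' theorem in the shapes $\prod_{p\le z}(1-p^{1/\log z-1})^{-1}\ll\log z$ and (via Rankin's trick with a well-chosen exponent) $\Psi(x,z)\ll x\exp(-c_3\log x/\log z)$ for some absolute $c_3>0$. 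Granting these, and noting that in the displayed sum only $d\le x$ contributes and that the ``$+1$'' parts sum to at most $\Psi(x,z)$,
\[
N(x,y,z)\;\ll\;\frac{x}{\log z}\sum_{\substack{d>y\\ P^+(d)\le z}}\frac1d\;+\;\Psi(x,z).
\]
For the sum over $d$, Rankin's trick with exponent $1/\log z$ gives $\sum_{d>y,\,P^+(d)\le z}d^{-1}\le y^{-1/\log z}\prod_{p\le z}(1-p^{1/\log z-1})^{-1}\ll(\log z)\,e^{-\log y/\log z}$, so the first term is $\ll x\,e^{-\log y/\log z}$: the $\log z$ from the Rankin product is cancelled precisely by the density $\asymp 1/\log z$ of the $z$-rough cofactor. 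For the second term, since $y\le x$ we get $\Psi(x,z)\ll x\,e^{-c_3\log x/\log z}\le x\,e^{-c_3\log y/\log z}$. Taking $c_2=\min\{1,c_3\}$ and $c_1$ the resulting implied constant yields the lemma, once one disposes of the trivial edge cases (e.g.\ $z\le e$, where the Rankin exponent $1/\log z$ must be replaced by something in $(0,1)$, or $z=O(1)$, where $\Psi(x,z)$ is negligible and everything in sight is elementary).

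I expect the one point of substance to be the realization that the crude union bound $N(x,y,z)\le x\sum_{d>y,\,P^+(d)\le z}d^{-1}$ is too weak by a factor of $\log z$---indeed $\prod_{p\le z}(1-p^{\sigma-1})^{-1}\ge\prod_{p\le z}(1-1/p)^{-1}\asymp\log z$ for \emph{every} $\sigma\in(0,1)$, so no choice of Rankin exponent removes it---and that the remedy is to keep the $z$-rough cofactor, whose sieve density supplies the compensating saving. The supporting estimates (the sieve bound and the two Mertens-type bounds) and the bookkeeping are all routine.
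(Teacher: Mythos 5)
Your proof is correct, but note that the paper itself does not prove this lemma at all: it simply cites it as Theorem 07 on p.\ 4 of Hall and Tenenbaum's \emph{Divisors}, so there is no in-paper argument to compare against. What you have written is a sound, essentially self-contained reconstruction of that cited result, and it is in fact the standard proof: decompose $n=dm$ into its $z$-smooth part $d$ and $z$-rough cofactor $m$, bound $\Phi(x/d,z)\ll x/(d\log z)+1$ by the sieve, apply Rankin with exponent $1/\log z$ to $\sum_{d\ge y,\,P^+(d)\le z}d^{-1}$, and absorb the leftover count of smooth $d\in[y,x]$ into $\Psi(x,z)\ll x\exp(-c_3\log x/\log z)$. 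You correctly identify the one non-routine point, namely that the naive union bound $x\sum_{d\ge y}d^{-1}$ loses a factor of $\log z$ that no choice of Rankin exponent can recover, and that retaining the rough cofactor's density $\asymp 1/\log z$ is what cancels the $\log z$ from the Euler product. Two small items you have flagged but should actually carry out if this were written in full: the uniform bound $\Psi(x,z)\ll x\exp(-c_3\log x/\log z)$ for \emph{all} $x\ge z\ge 2$ (equivalently $\Psi(x,z)\ll xe^{-u/2}$) requires its own Rankin argument with exponent $1-c/\log z$ and the same Mertens-type product estimate, and the range $z\le e$ (where $1/\log z\ge 1$) must be handled separately, e.g.\ by fixing the exponent at $1/2$ there. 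Neither is a gap in substance.
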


As our final piece of preparation, we recall an elementary fact from group theory that plays a key role in the argument.
\begin{lem}\label{lem:grouplem} Let $G$ be a finite abelian group, and let $g,h \in G$. If $q$ is a prime dividing the order of $g$ but not the order of $h$, then $q$ divides the order of $gh$.
\end{lem}
\begin{proof}[Proof (sketch)] If $g$ and $h$ are commuting elements of a group, with respective orders $m$ and $n$, it is well-known that the order of $gh$ is a multiple of $mn/\gcd(m,n)^2$.
\end{proof}

\begin{proof}[Proof of Theorem \ref{thm:basic}] We will show that for all large $x$, all but $o(x/\log{x})$ primes $p \le x$ are such that one of $a, b, ab, a^2 b$, or $ab^2$ has order exceeding $x^{8/15}/\exp(2\sqrt{\log x})$. For notational simplicity, set
\[ \xi = \log\log{x}. \]

At the cost of discarding $o(x/\log{x})$ `bad' primes $p\le x$, we can assume all of the following conditions hold:
\begin{enumerate}
\item the $\xi$-smooth part (largest $\xi$-smooth divisor) of $p-1$  is at most $\exp(\sqrt{\log x})$,
\item the $\xi$-rough part (the cofactor of the $\xi$-smooth part) of $p-1$  is squarefree,
\item $\ell_a(p), \ell_b(p) > x^{1/2}/\log{x}$, and $\ell_{a,b}(p) > x^{2/3}/\log{x}$,
\end{enumerate}
Indeed, Lemma \ref{lem:HT} implies that the total number of $m\le x$ with $\xi$-smooth part exceeding $\exp(\sqrt{\log x})$ is $o(x/\log{x})$. So the same upper bound certainly holds for the number of these $m$ of the form $p-1$, which handles condition (i). Keeping in mind the Brun--Titchmarsh inequality, we see that the number of primes excluded by (ii) is at most
\begin{align*} \sum_{q > \xi} \pi(x;q^2,1) &\le \sum_{\xi < q \le x^{1/4}} \pi(x;q^2,1) + \sum_{q > x^{1/4}} \frac{x}{q^2} \\
&\ll \frac{x}{\log{x}}\sum_{\xi < q \le x^{1/4}} \frac{1}{q^2} + x^{3/4} \ll \frac{x}{\xi \log{x}};   \end{align*}
this is also $o(x/\log{x})$. That (iii) excludes $o(x/\log{x})$ primes is immediate from Lemma \ref{lem:matthews}.

In the remainder of the proof, we study the prime factorization of the product $$\ell_a(p) \ell_b(p) \ell_{ab}(p) \ell_{a^2 b}(p) \ell_{ab^2}(p).$$

Let $q$ be a prime dividing $p-1$ with $q > \xi$. Then $q\parallel p-1$ (by condition (ii) above). Since the unit group mod $p$ is cyclic, it follows that for each integer $g$ coprime to $p$, the order $\ell_g(p)$ is divisible by $q$ precisely when $g$ is not a $q$th power mod $p$.

Let us use this observation to show that every prime $q>\xi$ dividing $\ell_a(p) \ell_b(p)$ divides at least four of the numbers $\ell_a(p)$, $\ell_b(p)$, $\ell_{ab}(p)$, $\ell_{ab^2}(p)$, $\ell_{a^2b}(p)$. If $q$ divides exactly one of $\ell_a(p)$ and $\ell_b(p)$, then $q$ divides all of $\ell_{ab}(p)$, $\ell_{ab^2}(p)$, $\ell_{a^2b}(p)$ by Lemma \ref{lem:grouplem}. (We use here that $q>2$, which is guaranteed since $x$ is large and $q > \xi$.) Thus the claim holds in this case. So suppose that $q$ divides both $\ell_a(p)$ and $\ell_b(p)$. In that case, $q$ must divide at least two of $\ell_{ab}(p)$, $\ell_{ab^2}(p)$, $\ell_{a^2b}(p)$. Otherwise, at least two of $ab, ab^2$, and $a^2 b$ are $q$th powers mod $p$. But then $a,b$ themselves are $q$th powers mod $p$, contradicting that $q\mid \ell_a(p)$ and $q\mid \ell_b(p)$. (We use here that $q>3$.) So the claim holds in this  case also.

Comparing prime factorizations, we deduce that \[ (\ell_a(p) \ell_b(p) \ell_{ab}(p) \ell_{a^2 b}(p) \ell_{ab^2}(p))^{1/4} \ge \prod_{\substack{q > \xi \\ q\mid \ell_a(p) \ell_b(p)}} q.  \]
Since the $\xi$-rough part of $p-1$ is squarefree, and $\ell_a(p), \ell_p(b)$ divide $p-1$, the right-hand product is the $\xi$-rough part of $\mathrm{lcm}[\ell_a(p),\ell_b(p)] = \ell_{a,b}(p)$, which by (i) and (iii) has size at least
\[ \ell_{a,b}(p)/\exp(\sqrt{\log x}) > x^{2/3}/\exp(2\sqrt{\log x}). \]
This, along with the preceding display, implies that the geometric mean of $\ell_a(p)$, $\ell_b(p)$, $\ell_{ab}(p)$, $\ell_{a^2 b}(p)$ and $\ell_{ab^2}(p)$ is at least $x^{8/15}/\exp(\frac{8}{5}\sqrt{\log x})$. The theorem follows.
\end{proof}

\begin{proof}[Proof of Theorem 1.2] The proof is very similar to that of Theorem \ref{thm:basic}. We keep the notation $\xi=\log\log{x}$. We impose conditions (i) and (ii) from the proof of Theorem \ref{thm:basic}, but replace (iii) with
\begin{enumerate}
\item[(iii$'$)] $\ell_{a_i}(p) > x^{1/2}/\log{x}$ for all $i$, and $\ell_{a_1,\dots,a_k}(p) > x^{k/(k+1)}/\log{x}$.
\end{enumerate}
Conditions (i), (ii), and (iii$'$)  exclude only $o(x/\log{x})$ primes $p\le x$, as $x\to\infty$.

Let $p\le x$ be one of the surviving primes, and let $q>\xi$ be a prime dividing $\ell_{a_1}(p) \cdots \ell_{a_k}(p)$. We claim that $q$ divides $\ell_{a}(p)$ for all but most $N^{k-1}-1$ elements $a \in \Aa$. To see this, let $g$ be a primitive root mod $p$, and write $\log_g(\cdot)$ for the discrete logarithm mod $p$ to the base $g$, which is well-defined (on inputs prime to $p$) as an integer modulo $p-1$. In order for $q$ not to divide $\ell_a(p)$, it must be that $q$ divides $\log_g(a)$. So if we write $a=a_1^{e_1}\cdots a_k^{e_k}$, the number of $a \in \Aa$ for which $q$ does not divide $\ell_a(p)$ is bounded above by the number of $e_1,\dots, e_k$ satisfying the congruence
\begin{equation}\label{eq:boxcongruence} e_1 \log_g(a_1) + \dots + e_k \log_g(a_k) \equiv 0 \pmod{q}, \end{equation}
where $0 \le e_i < N$ for all $i$ and not all $e_i=0$.

By assumption, $q\mid \ell_{a_i}(p)$ for some $i$, and so $q\nmid \log_{g}{(a_i)}$. Relabeling, we can suppose $i=1$. Thus, if we pick $e_j$ arbitrarily for $j=2,\dots,k$, then \eqref{eq:boxcongruence} determines $e_1$ mod $q$. We are assuming $x$ is large, and so $q > \xi > N$. It follows that for any choice of $e_2,\dots,e_k$, there is at most one $e_1 \in [0,N)$ satisfying \eqref{eq:boxcongruence}. Hence, the number of solutions to \eqref{eq:boxcongruence} satisfying our constrains on the $e_i$ is at most $N^{k-1}-1$. Here the `$-1$' comes from noticing that the choice $e_2=e_3=\dots=e_k=0$ forces $e_1=0$, while $e_1=e_2=\dots=e_k=0$ is not allowed.

Since $|\Aa| = N^k-1$, we conclude that each prime $q>\xi$ dividing $\ell_{a_1}(p) \cdots \ell_{a_k}(p)$ divides $\ell_{a}(p)$ for at least $|\Aa| - (N^{k-1}-1) = N^k-N^{k-1}$ choices of $a\in \Aa$. Now arguing as in the proof of Theorem \ref{thm:basic}, we find that as $x\to\infty$,
\begin{align*} \left(\prod_{a \in\Aa} \ell_a(p)\right)^{\frac{1}{N^k-N^{k-1}}} &\ge \prod_{\substack{q > \xi\\ q\mid \ell_{a_1}(p) \cdots \ell_{a_k}(p)}} q \ge \ell_{a_1,\dots,a_k}(p)/\exp(\sqrt{\log x})
\ge x^{1-\frac{1}{k+1} + o(1)},\end{align*}
so that
\[ \left(\prod_{a \in\Aa} \ell_a(p)\right)^{1/|\Aa|} \ge x^{\delta' + o(1)},  \]
where
\begin{equation}\label{eq:deltaprimedef} \delta':= \left(1-\frac{1}{k+1}\right) \frac{N^k-N^{k-1}}{|\Aa|}. \end{equation}
Since $\delta' > \left(1-\frac{1}{k+1}\right) \frac{N^k-N^{k-1}}{N^k} = \left(1-\frac{1}{k+1}\right) \left(1-\frac{1}{N}\right) =\delta$, the exponent on $x$ in the last display exceeds $\delta$ once $x$ is sufficiently large. The theorem follows.
\end{proof}

\section{Primes dividing $\prod_{a\mid A}(a^m+a^n+1)$: Proof of Corollary \ref{cor:skalbavariant}}

The following lemma is due to Ska\l{}ba \cite{skalba04}.

\begin{lem}\label{lem:skalba} let $p$ be a prime, and let $a$ be an integer not divisible by $p$. Suppose that $\ell_a(p) > p^{3/4}$. Then $p$ divides $a^m + a^n+1$ for some positive integers $m,n$.
\end{lem}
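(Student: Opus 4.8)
The plan is to convert the statement into a question about the additive structure of the multiplicative group $H := \langle a \bmod p\rangle \leq \F_p^{\times}$, whose order is $\ell := \ell_a(p) > p^{3/4}$. Since $a^m \bmod p$ runs through every element of $H$ as $m$ runs through the positive integers, it suffices to produce $x,y \in H$ with $x+y = -1$ in $\F_p$ (then pick positive $m,n$ with $a^m \equiv x$, $a^n\equiv y\pmod p$). I would argue by contradiction: suppose no such pair exists. Because $H$ is a group, multiplying the relation $x+y = -1$ through by an arbitrary $h \in H$ shows $x+y \neq -h$ for all $x,y,h\in H$; equivalently, the sumset $H+H$ is disjoint from $-H$. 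Since $|-H| = \ell$, this forces $|H+H| \leq p-\ell$.

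The other half of the argument is a lower bound for $|H+H|$ via additive energy. Writing $r(c) = \#\{(x,y)\in H^2 : x+y=c\}$, so that $\sum_c r(c) = \ell^2$ and $H+H = \{c : r(c) > 0\}$, Cauchy--Schwarz gives $\ell^4 \leq |H+H|\cdot E$ with $E := \sum_c r(c)^2$. A standard discrete Fourier computation on $\F_p$ gives $E = \frac1p\sum_{t\in\F_p}|S(t)|^4$, where $S(t) = \sum_{h\in H} e_p(th)$ with $e_p(u) = e^{2\pi i u/p}$. Here $|S(0)| = \ell$; for $t\not\equiv 0$ one has $|S(t)|\leq\sqrt p$ (expand $\mathbf{1}_H$ over the $d := (p-1)/\ell$ multiplicative characters trivial on $H$, and use that each nontrivial Gauss sum has modulus exactly $\sqrt p$ while $g(\epsilon_0) = -1$); and Parseval gives $\sum_t |S(t)|^2 = p\ell$. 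Combining, $E \leq \frac1p\bigl(\ell^4 + p\cdot p\ell\bigr) = \frac{\ell^4}{p} + p\ell$, so $|H+H| \geq \ell^4/E \geq \dfrac{p\ell^3}{\ell^3+p^2}$.

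Finally I would compare the two estimates. Clearing the positive denominator, $\dfrac{p\ell^3}{\ell^3+p^2} > p-\ell$ is equivalent to $\ell^4 + \ell p^2 > p^3$, which holds because $\ell > p^{3/4}$ already gives $\ell^4 > p^3$. This contradicts $|H+H|\leq p-\ell$, so the desired $x,y$ exist and the lemma follows. I expect the only genuinely non-elementary ingredient to be the Weil-type bound $|\sum_{h\in H} e_p(th)|\leq\sqrt p$ for $t\not\equiv 0$; everything else is bookkeeping. The one point that deserves care is checking that the closing inequality $\ell^4 + \ell p^2 > p^3$ really does hold at the exact threshold $\ell > p^{3/4}$ with no implied constants — it does, with the term $\ell p^2$ to spare — so that the hypothesis $\ell_a(p) > p^{3/4}$ (rather than something like $\ell_a(p) > Cp^{3/4}$) is enough.
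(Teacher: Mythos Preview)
Your proof is correct. The approach, however, is genuinely different from the paper's. The paper works directly: with $d=(p-1)/\ell < p^{1/4}$, the subgroup $H$ is exactly the set of nonzero $d$th powers, so one wants a solution to $x^d+y^d=-1$ with $x,y\neq 0$. For $d>1$ the paper cites the Weil/Jacobi-sum bound for diagonal equations (Ireland--Rosen) to get at least $p-(d-1)^2\sqrt{p}-2d>0$ such solutions; the case $d=1$ is handled by hand via $a^m=-1/2$.

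You instead argue by contradiction through additive combinatorics: if $-1\notin H+H$ then, by the multiplicative invariance of $H$, all of $-H$ is missed, forcing $|H+H|\le p-\ell$; then Cauchy--Schwarz plus the Gauss-sum bound $|S(t)|\le\sqrt{p}$ for $t\neq 0$ gives $|H+H|\ge p\ell^3/(\ell^3+p^2)$, and the two bounds collide exactly when $\ell^4>p^3$. Both routes rest on the same character-sum input (Gauss sums vs.\ Jacobi sums, which are equivalent in depth). The paper's count is shorter and more direct, giving an explicit number of representations of $-1$; your argument is a bit more circuitous but is self-contained (no case split, no external citation beyond $|g(\chi)|=\sqrt{p}$) and makes transparent why the exponent $3/4$ is the natural threshold.
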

\begin{proof} We include the proof for completeness. Note that the hypothesis $\ell_a(p) > p^{3/4}$ implies that $p$ is odd. Let $d = (p-1)/\ell_a(p)$, so that $d < p^{1/4}$. If $d=1$, then $g$ generates $\F_p^{\times}$, and the result is easy: We choose $m$ with $a^m = -1/2$ in $\F_p$ and then take $n=m$. So suppose that $d>1$. The subgroup of ${\F_p}^{\times}$ generated by $a$ coincides with the collection of nonzero $d$th powers in $\F_p$. By Theorem 5 on p.\, 103 of \cite{IR90}, the number of solutions $(x,y)$ to $x^d+y^d=-1$ over $\F_p$ is at least $p-(d-1)^2\sqrt{p}$, and so the number of solutions with $x,y\ne 0$ is at least
\[ p-(d-1)^2\sqrt{p} - 2d > p - d^2 \sqrt{p} > 0. \qedhere\]
\end{proof}

\begin{proof}[Proof of Corollary \ref{cor:skalbavariant}] We keep the notation from the statement and proof of Theorem \ref{thm:general}. We follow the proof of that theorem with $k=5$, with $a_1=2, a_2 = 3, a_3 = 5, a_4 = 7, a_5=11$, and  with $N=10$. That argument shows that for each fixed $\epsilon > 0$, and almost all primes $p$, there is a divisor $a>1$ of $A = (2\cdot3 \cdot 5 \cdot 7\cdot 11)^{9}$ such that
\[ \ell_a(p) \ge p^{\delta'- \epsilon}. \]
Since $\delta' > \delta$, and $\delta = (1-\frac{1}{6})(1-\frac{1}{10})= \frac{3}{4}$, we may complete the proof by fixing $\epsilon$ sufficiently small and applying Lemma \ref{lem:skalba}.
\end{proof}

\begin{rmk} In contrast with Corollary \ref{cor:skalbavariant}, for every positive integer $A$, there is a positive density set of primes $p$ not dividing $\prod_{a \mid A}{(a^n+1)}$ for any integer $n$. Indeed, using quadratic reciprocity one can construct a coprime residue class such that for each  $p$ lying in this class, every prime dividing $A$ is a square mod $p$ but $-1$ is not a square mod $p$. Then the exponential congruence $a^n \equiv -1\pmod{p}$ is not solvable for any $a \mid A$.
\end{rmk}

\section{Composite integers: Proof of Theorem \ref{thm:composite}}

Our argument is modeled on Kurlberg and Pomerance's proof of Theorem 1(a) in \cite{KP05}, which asserts that for each fixed $a\notin \{0,\pm 1\}$, we have $\ell_a(n) > n^{1/2+\epsilon(n)}$ for almost all $n$.

We make crucial use of the following estimate of Kurlberg and Rudnick \cite[\S5]{KR01} (see \cite[Lemma 5]{KP05} for a shorter proof), bounding $\ell_{a}(n)$ from below in terms of the numbers $\ell_a(p)$ for primes $p$ dividing $n$. Here $\lambda(n)$ is \textsf{Carmichael's function}, i.e., the exponent of the multiplicative group mod $n$.

\begin{prop}\label{prop:KR} For each nonzero integer $a$ and each positive integer $n$,
\[ \ell_a(n) \ge \frac{\lambda(n)}{n}\prod_{p\mid n,~p\nmid a} \ell_a(p). \]
\end{prop}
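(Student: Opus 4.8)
The plan is to reduce the whole estimate to one clean divisibility fact about least common multiples. The key elementary observation I would establish first is: \emph{if $a_1,\dots,a_r$ and $b_1,\dots,b_r$ are positive integers with $a_i\mid b_i$ for every $i$, then $\prod_i a_i/\mathrm{lcm}_i a_i \le \prod_i b_i/\mathrm{lcm}_i b_i$.} This is proved by comparing $q$-adic valuations for each prime $q$: writing $\alpha_i=v_q(a_i)\le\beta_i=v_q(b_i)$ and picking an index $j$ with $\beta_j$ maximal, one has $\sum_i\alpha_i-\max_i\alpha_i\le\sum_i\alpha_i-\alpha_j=\sum_{i\ne j}\alpha_i\le\sum_{i\ne j}\beta_i=\sum_i\beta_i-\max_i\beta_i$. (Intuitively, enlarging the $a_i$ to the $b_i$ can only increase the ``overlap'' $\prod/\mathrm{lcm}$.)

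Second, I would reduce to the case $\gcd(a,n)=1$. Let $n'$ be the largest divisor of $n$ coprime to $a$, and factor $n=n'm$, so that $\gcd(n',m)=1$ and every prime of $m$ divides $a$. Then $\ell_a(n)=\ell_a(n')$, the primes $p\mid n$ with $p\nmid a$ are precisely those dividing $n'$, and $\lambda(n)=\mathrm{lcm}[\lambda(n'),\lambda(m)]$ divides $\lambda(n')\lambda(m)$; since $\lambda(m)\le m$ this gives $\lambda(n)/n\le\lambda(n')/n'$. Hence the case $(a,n')$ implies the case $(a,n)$, and I may assume $\gcd(a,n)=1$.

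For the coprime case, write $n=\prod_i p_i^{k_i}$ and apply the elementary observation with $a_i=\ell_a(p_i^{k_i})$ and $b_i=\lambda(p_i^{k_i})$; this is legitimate because $\ell_a(p_i^{k_i})$ is the order of $a$ in a group of exponent $\lambda(p_i^{k_i})$. By the Chinese Remainder Theorem $\mathrm{lcm}_i\ell_a(p_i^{k_i})=\ell_a(n)$, by definition $\mathrm{lcm}_i\lambda(p_i^{k_i})=\lambda(n)$, one has $\prod_i\ell_a(p_i^{k_i})\ge\prod_i\ell_a(p_i)=\prod_{p\mid n}\ell_a(p)$ (since $\ell_a(p_i)\mid\ell_a(p_i^{k_i})$), and $\prod_i\lambda(p_i^{k_i})\le\prod_i\phi(p_i^{k_i})=\phi(n)\le n$. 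Stringing these together gives $\prod_{p\mid n}\ell_a(p)/\ell_a(n)\le n/\lambda(n)$, which rearranges to the asserted bound.

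The argument is short and essentially self-contained, so I do not anticipate a serious obstacle; the one place to be careful is the reduction step, where the statement insists on $\lambda(n)/n$ rather than the possibly larger $\lambda(n')/n'$ — this is exactly where the trivial bound $\lambda(m)\le m$ for the $a$-part $m$ of $n$ gets used. The only mildly nonobvious design choice is to pair the $\mathrm{lcm}$ inequality with a comparison of $\prod_i\ell_a(p_i^{k_i})$ against $\prod_i\lambda(p_i^{k_i})$ (and only afterward pass down to the first powers $\ell_a(p_i)$ and up to $\phi(n)\le n$), rather than trying to compare directly with $\prod_i(p_i-1)$.
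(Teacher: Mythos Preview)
Your argument is correct. The elementary lcm inequality is proved cleanly (the $q$-adic comparison via the index $j$ maximizing $\beta_j$ is valid), the reduction to the coprime case via $\lambda(n)/n\le(\lambda(n')/n')(\lambda(m)/m)\le\lambda(n')/n'$ is fine, and in the coprime case the chain
\[
\frac{\prod_{p\mid n}\ell_a(p)}{\ell_a(n)}\;\le\;\frac{\prod_i\ell_a(p_i^{k_i})}{\mathrm{lcm}_i\,\ell_a(p_i^{k_i})}\;\le\;\frac{\prod_i\lambda(p_i^{k_i})}{\mathrm{lcm}_i\,\lambda(p_i^{k_i})}\;\le\;\frac{n}{\lambda(n)}
\]
is exactly right.

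As for comparison: the paper does not supply its own proof of this proposition; it merely cites Kurlberg--Rudnick \cite{KR01} for the original argument and Kurlberg--Pomerance \cite{KP05} for a shorter one. Your proof is in fact essentially the Kurlberg--Pomerance argument: their Lemma~5 is proved via precisely the divisibility lemma you isolate (if $a_i\mid b_i$ for all $i$ then $\prod a_i/\mathrm{lcm}\,a_i\le\prod b_i/\mathrm{lcm}\,b_i$), applied with $a_i=\ell_a(p_i^{k_i})$ and $b_i=\lambda(p_i^{k_i})$. So you have independently reconstructed the ``shorter proof'' the paper points to.
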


\noindent In our application of Proposition \ref{prop:KR}, we will replace $\frac{\lambda(n)}{n}$ with the lower bound from the  next result, which appears as Lemma 10 of \cite{KP05}.

\begin{lem}\label{lem:lambda} For all large $x$, all but $O(x/(\log{x})^{10})$ values of $n\le x$ satisfy
\[ \lambda(n) > n \exp(-(\log\log{n})^3).\]
\end{lem}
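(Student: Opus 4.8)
The plan is to prove the equivalent assertion that, for all large $x$, all but $O(x/(\log x)^{10})$ integers $n\le x$ satisfy $n/\lambda(n)\le\exp(\tfrac12(\log\log x)^3)$; since the $n\le\sqrt{x}$ are negligible in number and $(\log\log n)^3\ge(\log\log x)^3-O((\log\log x)^2)$ for $\sqrt{x}<n\le x$, this yields the lemma once $x$ is large. I would first strip away the ``cheap'' part of $n/\lambda(n)$. Writing $n/\lambda(n)=\tfrac{n}{\varphi(n)}\cdot\tfrac{\varphi(n)}{\lambda(n)}$, one has $\log\tfrac{n}{\varphi(n)}\le\sum_{p\mid n}\tfrac2p\le 2\sum_{p\le\log x}\tfrac1p+O(1)=O(\log\log\log x)$ for \emph{every} $n\le x$. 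If $n=2^b m$ with $m$ odd, then $\tfrac{\varphi(n)}{\lambda(n)}\le\varphi(2^b)\cdot\tfrac{\varphi(m)}{\lambda(m)}\le\varphi(2^b)\cdot\tfrac{m}{\rad(m)}\cdot\tfrac{\varphi(\rad m)}{\lambda(\rad m)}$; and outside a set of $n\le x$ of size $O(x/(\log x)^{10})$ we have both $2^b\le(\log x)^{20}$ (by a trivial count) and $n/\rad(n)\le(\log x)^{20}$ (since $n/\rad(n)>t$ forces $n$ to have a powerful divisor $>t$, and powerful numbers $>t$ contribute $\ll x/\sqrt{t}$). Hence, off an exceptional set of the allowed size, $n/\lambda(n)\le(\log x)^{O(1)}\cdot\tfrac{\varphi(r)}{\lambda(r)}$, where $r:=\rad(m)$ is the squarefree odd kernel of $n$; moreover at most $(\log x)^{O(1)}$ of the surviving $n$ share a given kernel $r$. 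It therefore suffices to bound the number of squarefree odd $r\le x$ with $\varphi(r)/\lambda(r)\ge\exp(\tfrac13(\log\log x)^3)$ by $O(x/(\log x)^C)$ for a suitably large absolute constant $C$.

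For squarefree odd $r$ with set of prime factors $P$ we have $\varphi(r)=\prod_{p\in P}(p-1)$ and $\lambda(r)=\operatorname{lcm}_{p\in P}(p-1)$, so the elementary inequality $\operatorname{lcm}[m_1,\dots,m_j]\ge\prod_i m_i/\prod_{i<i'}\gcd(m_i,m_{i'})$ --- which holds because, for each prime $q$, sorting the $q$-adic valuations as $e_1\ge\cdots\ge e_j$ gives $\sum_i e_i-\sum_{i<i'}\min(e_i,e_{i'})=e_1-\sum_{i\ge3}(i-2)e_i\le e_1$ --- yields
\[ \frac{\varphi(r)}{\lambda(r)}\le\prod_{\{p,p'\}\subseteq P}\gcd(p-1,p'-1). \]
Thus the problem reduces to showing $\#\{r\le x\text{ squarefree odd}: S(r)\ge\tfrac13(\log\log x)^3\}\ll x/(\log x)^C$, where $S(r):=\sum_{\{p,p'\}\subseteq P}\log\gcd(p-1,p'-1)$.

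I would handle this tail by the method of moments with a slowly growing number of moments. A first-moment estimate gives
\[ \frac1x\sum_{r\le x}S(r)\le\sum_{p<p'}\frac{\log\gcd(p-1,p'-1)}{pp'}=\sum_{q,\,j\ge1}\log q\!\!\sum_{\substack{p<p'\\ q^j\mid p-1,\,q^j\mid p'-1}}\!\!\frac1{pp'}\ll\sum_{q,j}\log q\Bigl(\sum_{\substack{p\le x\\ p\equiv1\,(q^j)}}\frac1p\Bigr)^{2}\ll(\log\log x)^2, \]
using the Mertens-type bound $\sum_{p\le x,\,p\equiv1(q^j)}\tfrac1p\ll\tfrac{\log\log x+\log(2q^j)}{\varphi(q^j)}$ together with the convergence of $\sum_q(\log q)^3/(q-1)^2$. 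More generally, for a positive integer $k$, a Tur\'an--Kubilius-type expansion of $\tfrac1x\sum_{r\le x}S(r)^k$ --- obtained by grouping the $2k$ prime variables according to their coincidence pattern and expanding each weight $\log\gcd(p-1,p'-1)$ as $\sum_{q^j\mid\gcd}\log q$ --- should give $\tfrac1x\sum_{r\le x}S(r)^k\le\bigl(C_0(\log\log x)^2\bigr)^k$ for an absolute constant $C_0$, uniformly for $1\le k\le\log\log x$. Taking $k=\lceil\log\log x\rceil$ and applying Markov's inequality then bounds the count above by $x\bigl(3C_0(\log\log x)^2/(\log\log x)^3\bigr)^k=x\,(3C_0/\log\log x)^k$, which is $o(x/(\log x)^{C})$ for every fixed $C$ once $x$ is large. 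That completes the plan.

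The hard part is the uniform moment bound $\tfrac1x\sum_{r\le x}S(r)^k\le(C_0(\log\log x)^2)^k$ for $k$ growing with $x$: a single fixed moment, or Markov on the mean, only saves a power of $\log\log x$, not of $\log x$, so a growing $k$ is unavoidable. Expanding $S(r)^k$ produces a sum over $k$-tuples of pairs of primes, each term weighted by a product of $\log\gcd$'s and carrying a factor $1/\operatorname{lcm}$ of the primes involved; the ``diagonal'' terms (all $2k$ primes distinct) contribute precisely $(\text{first moment})^k$, and one must verify that the off-diagonal terms --- individually larger but combinatorially sparser --- do not destroy the bound. This is standard in spirit --- it belongs to the same circle of ideas as the Erd\H{o}s--Kac theorem and the Erd\H{o}s--Pomerance--Schmutz normal-order results for $\lambda$ --- but the uniformity in a growing $k$ needs genuine care, the more so because each $\log\gcd(p-1,p'-1)$ is itself a sum over prime powers $q^j\mid p-1$, adding a layer to the combinatorics that must be tamed by the convergence of series such as $\sum_q(\log q)^3/(q-1)^2$.
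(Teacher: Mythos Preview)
The paper does not prove this lemma; it simply quotes it as Lemma~10 of Kurlberg--Pomerance \cite{KP05}. So there is nothing in the paper to compare your argument against --- you are attempting to supply a self-contained proof where the authors were content to cite.

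Your reduction to a tail bound for $S(r)=\sum_{\{p,p'\}\subseteq P(r)}\log\gcd(p-1,p'-1)$ via the inequality $\varphi(r)/\lambda(r)\le\prod_{\{p,p'\}}\gcd(p-1,p'-1)$ is correct, and the first-moment computation is fine. But the argument, as you yourself concede, has a genuine gap: the uniform bound $\tfrac1x\sum_{r\le x}S(r)^k\le(C_0(\log\log x)^2)^k$ for $k\asymp\log\log x$ is asserted, not proved, and this is where essentially all the content lies. When many of the $2k$ prime slots collapse, the $\mathrm{lcm}$ in the denominator shrinks and individual terms blow up; ``standard in spirit'' does not establish that the combinatorics still close with an absolute $C_0$. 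There is also a quieter problem in the reduction step: the claim that at most $(\log x)^{O(1)}$ of the surviving $n$ share a given odd kernel $r$ is false as stated. The number of $r$-supported integers up to $(\log x)^{20}$ is of order $(C\log\log x)^{\omega(r)}$, which for $\omega(r)\sim\log\log x$ is $(\log x)^{\Theta(\log\log\log x)}$, not $(\log x)^{O(1)}$. Since your Markov saving with $k=\lceil\log\log x\rceil$ is only $(\log x)^{-(1+o(1))\log\log\log x}$, this multiplier can swamp it; the kernel reduction would need to be reorganised (or $k$ pushed higher, making the unproved moment bound harder still). As written, the proposal is a plausible roadmap rather than a proof.
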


We also use the following result of Ford concerning the number of shifted primes $p-1$ with a divisor from a given interval (see Theorems 1(v) and 6 in \cite{ford08}).

\begin{prop}\label{prop:ford} Suppose $x, y \ge 10^5$ with $y \le \sqrt{x}$ and $2y\le z \le y^2$. Write $z=y^{1+u}$. The proportion of primes not exceeding $x$ for which $p-1$ has a divisor from the interval $(y,z]$ is
\[ \ll u^{\eta} \left(\log\frac{2}{u}\right)^{-3/2},\]
where
\[ \eta := 1-\frac{1+\log\log{2}}{\log{2}} \quad (\approx 0.086). \]
\end{prop}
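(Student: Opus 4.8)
\textbf{Proof proposal for Theorem \ref{thm:composite}.}

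The plan is to mimic the proof of Theorem 1$'$ as closely as possible, using Proposition \ref{prop:KR} to transfer the lower bounds on orders mod primes $p$ into a lower bound on orders mod $n$, and using Lemma \ref{lem:lambda} to control the loss incurred by the factor $\lambda(n)/n$. Fix $\epsilon(x)\to 0$; as usual it suffices to produce, for each large $x$, an exceptional set of size $o(x)$ among $n\le x$ outside of which the claimed inequality holds. Set $\xi = \log\log x$ and, in analogy with the earlier arguments, let me call a prime $p$ \emph{good} (relative to $x$) if: (a) the $\xi$-smooth part of $p-1$ is at most $\exp(\sqrt{\log p})$; (b) the $\xi$-rough part of $p-1$ is squarefree; (c) $\ell_a(p),\ell_b(p) > p^{1/2}/\log p$ and $\ell_{a,b}(p) > p^{2/3}/\log p$. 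The proof of Theorem \ref{thm:basic} already shows that outside a set of primes of relative density zero, a prime is good, and that for good $p$ one has the geometric-mean bound
\[ \big(\ell_a(p)\ell_b(p)\ell_{ab}(p)\ell_{a^2b}(p)\ell_{ab^2}(p)\big)^{1/5} \;\ge\; p^{8/15}/\exp(O(\sqrt{\log p})). \]
So the heart of the matter is to show that for almost all $n\le x$, the contribution of the bad (or $a$- or $b$-dividing) primes $p\mid n$ to the product in Proposition \ref{prop:KR} is negligible, while the good primes are plentiful enough.

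Concretely, write $c$ for any one of $a,b,ab,a^2b,ab^2$ and apply Proposition \ref{prop:KR} to each: for $n$ coprime to $ab$ (the non-coprime $n$ are a density-zero set we discard), $\ell_c(n) \ge \frac{\lambda(n)}{n}\prod_{p\mid n}\ell_c(p)$. Restricting the product to good $p\mid n$ only decreases it, so
\[ \max_c \ell_c(n) \;\ge\; \frac{\lambda(n)}{n}\,\max_c \prod_{\substack{p\mid n\\ p\text{ good}}}\ell_c(p) \;\ge\; \frac{\lambda(n)}{n}\Big(\prod_{\substack{p\mid n\\ p\text{ good}}}\ell_a(p)\ell_b(p)\ell_{ab}(p)\ell_{a^2b}(p)\ell_{ab^2}(p)\Big)^{1/5}. \]
By the geometric-mean bound above, the product over good $p\mid n$ is at least $\prod_{p\mid n,\ p\text{ good}} p^{8/15}/\exp(O(\sqrt{\log p}))$. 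Now I need two density statements. First, by Lemma \ref{lem:lambda}, all but $O(x/(\log x)^{10})$ values of $n\le x$ have $\lambda(n)/n > \exp(-(\log\log n)^3) = n^{o(1)}$, which is harmless. Second — and this is the real work — I must show that for almost all $n\le x$, the product of the \emph{bad} primes dividing $n$, and also of the primes $p\le \exp(\sqrt[3]{\log x})$ dividing $n$ (say), is $n^{o(1)}$, so that $\prod_{p\mid n,\ p\text{ good}} p = n^{1+o(1)}$ and the $\exp(O(\sqrt{\log p}))$ factors telescope to $n^{o(1)}$ as well. Granting this, $\max_c \ell_c(n) \ge n^{8/15 + o(1)}$, which beats $n^{8/15+\epsilon(n)}$ for $n$ large; replacing $o(1)$ by a genuine $\epsilon(n)$ at the end is the same bootstrapping via Erd\H{o}s--Murty / Proposition \ref{prop:ford} used before.

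The main obstacle is therefore the claim that the product of the bad primes dividing a typical $n\le x$ is $n^{o(1)}$. This is handled by a standard first-moment (Mertens-type) argument: if $\mathcal{B}$ denotes the set of bad primes, then $\sum_{n\le x}\sum_{p\mid n,\ p\in\mathcal{B},\ p>p_0}\log p \le x\sum_{p\in\mathcal{B},\ p>p_0}\frac{\log p}{p}$, and the relative density-zero bounds for each defining failure (condition (a) fails for $O(1/(\log^2 p))$ of primes by Lemma \ref{lem:HT}; condition (b) for $O(1/(\xi\log^2 p))$-type counts as in the proof of Theorem \ref{thm:basic}; condition (c) for $O(1/(\log p)^{1/k'})$-type counts by Lemma \ref{lem:matthews}) make $\sum_{p\in\mathcal{B}}\frac{\log p}{p}$ converge or grow only like a bounded function, so that $\sum_{n\le x}\sum_{p\mid n,\ p\in\mathcal{B}}\log p = O(x)$, whence $\prod_{p\mid n,\ p\in\mathcal{B}}p \le \exp((\log\log x)^2)$, say, for all but $o(x)$ values of $n\le x$; one must be slightly careful to choose the quantitative forms of (a)--(c) so that the relevant series is genuinely small (for instance strengthening the power of $\log$ in (a) and the exponent in (c)), exactly as in the Kurlberg--Pomerance treatment of the single-$a$ case. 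Once the good primes carry almost all of $\log n$, the rest is bookkeeping, and the analogue of Theorem \ref{thm:general} for composite $n$ follows by the same substitution of the argument of Theorem 1.2 in place of that of Theorem \ref{thm:basic}.
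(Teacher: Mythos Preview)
Your proposal is for Theorem~\ref{thm:composite}, not Proposition~\ref{prop:ford} (which the paper does not prove but simply cites from Ford). I compare your sketch to the paper's proof of Theorem~\ref{thm:composite}.

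The overall architecture --- Proposition~\ref{prop:KR} plus Lemma~\ref{lem:lambda} plus a first-moment argument showing that the bad primes dividing $n$ contribute only $n^{o(1)}$ --- matches the paper and does yield $\max_c \ell_c(n) \ge n^{8/15+o(1)}$ for almost all $n$. (One slip: the integers $n$ not coprime to $ab$ do \emph{not} form a density-zero set; one instead absorbs the finitely many primes dividing $ab$ into the bad set, as the paper does by putting them in $U$.)

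The genuine gap is your final step. You assert that upgrading $n^{8/15+o(1)}$ to $n^{8/15+\epsilon(n)}$ is ``the same bootstrapping via Erd\H{o}s--Murty / Proposition~\ref{prop:ford} used before.'' That bootstrapping (Theorem~\ref{thm:basic} $\Rightarrow$ Theorem~1$'$) rests on the fact that $\ell_c(p)\mid p-1$ and that for almost all primes $p$ the integer $p-1$ has no divisor in a short window around $p^{8/15}$. For composite $n$ one only has $\ell_c(n)\mid \lambda(n)$, and $\lambda(n)$, being an lcm of many shifted primes $p-1$, is typically highly composite with divisors throughout any interval $[n^{\alpha-\epsilon},n^{\alpha+\epsilon}]$. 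No analogue of Proposition~\ref{prop:ford} is available for the divisors of $\lambda(n)$, and this post-hoc repair does not go through.

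The paper avoids this by applying Ford's estimate \emph{at the prime level, inside the main argument}. It refines the good primes into a sparse set $V$ (those with $p^{2/3}/\log p < \ell_{a,b}(p) \le p^{2/3+5\epsilon(p)}$) and a dense set $W$ (those with $\ell_{a,b}(p)>p^{2/3+5\epsilon(p)}$). Proposition~\ref{prop:ford}, applied to the divisor $(p-1)/\ell_{a,b}(p)$ of $p-1$, shows $V$ is thin enough that $n_V\le x^{1/10}$ for almost all $n\le x$; meanwhile for each $p\in W$ the Theorem~\ref{thm:basic} computation gives the \emph{stronger} bound $\prod_c \ell_c(p)\ge p^{8/3+20\epsilon(p)}/\exp(4\sqrt{\log p})$. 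Since $n_W$ carries essentially all of $n$, the extra $20\epsilon(p)$ survives the product over $p\mid n_W$ and delivers $n^{8/15+\epsilon(n)}$. Your sketch is missing precisely this $V/W$ refinement, and without it the target inequality with the $+\epsilon(n)$ is not reached.
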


We now embark on the proof proper of Theorem \ref{thm:composite}.
Replacing the function $\epsilon(t)$ with $\max_{n\ge t} \epsilon(n)$, we can assume that $\epsilon(t)$ is (weakly) decreasing for all $t\ge 2$. Then replacing $\epsilon(t)$ with $\max\{\epsilon(t), 1/\log\log\log{(100 t)}\}$, we can further assume that
\begin{equation}\label{eq:epsilonlower}
 \epsilon(t) \ge 1/\log\log\log(100t)\qquad\text{for all $t\ge 1$}.
\end{equation}

Throughout this proof, we view $a$, $b$, and the function $\epsilon(t)$ as fixed. In particular, when we speak of ``large'' parameters, their required size may depend on $a$, $b$, and $\epsilon(t)$.

Now let $x$ be a large real number, and let $\xi=\log\log{x}$ as before. For primes $p\le x$, we introduce conditions (i)--(iii) defined as follows:
\begin{enumerate}
\item[(i)] the $\xi$-smooth part of $p-1$ is at most $\exp(\sqrt{\log{p}})$,
\item[(ii)] the $\xi$-rough part of $p-1$ is squarefree,
\item[(iii)] $\ell_a(p), \ell_b(p) > p^{1/2}/\log{p}$ and $\ell_{a,b}(p) > p^{2/3}/\log{p}$.
\end{enumerate}
(These are slight variants of conditions (i)--(iii) appearing in the proof of Theorem \ref{thm:basic}.) We  partition the $p\le x$ into classes $U,V,W$, where
\[ U = \{p\le x: p \mid ab\text{ or } p \le \xi \text{ or at least one of (i)--(iii) fails}\}, \]
\[ V = \{p \le x: p \notin U \text{ and } p^{2/3}/\log{p} < \ell_{a,b}(p) \le p^{2/3+5\epsilon(p)}\}, \]
\[ W = \{p \le x: p\notin U \text{ and }\ell_{a,b}(p) > p^{2/3+5\epsilon(p)}\}. \]
We write $\pi_U(t), \pi_V(t)$, and $\pi_W(t)$ for the counts of primes $p\le t$ in the sets $U,V$, and $W$, respectively.

\begin{lem}\label{lem:piUbound} For all large $x$,
\[ \pi_U(t) \ll \frac{\log{x}}{\log\log{x}} + \frac{t}{(\log{t})^{3/2}} + \frac{t}{\log{t} \cdot \log\log{x}} \]
uniformly for $2\le t \le x$.
\end{lem}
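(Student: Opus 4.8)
The plan is to bound $\pi_U(t)$ by the sum of the five counts coming from the clauses defining $U$:
\[ \pi_U(t) \le \#\{p\le t: p\mid ab\} + \pi(\xi) + \#\{p\le t: \text{(i) fails}\} + \#\{p \le t: \text{(ii) fails}\} + \#\{p\le t:\text{(iii) fails}\}, \]
and to show that each of the five summands is $\ll \frac{\log x}{\log\log x} + \frac{t}{(\log t)^{3/2}} + \frac{t}{\log t\log\log x}$. Since $\pi_U(t)\le \pi(t) = O(1)$ whenever $t$ is bounded by an absolute constant, and $\frac{\log x}{\log\log x}\to\infty$, we may assume throughout that $t$ exceeds a suitable absolute constant; in particular, all the monotonicity and Brun--Titchmarsh hypotheses used below are then satisfied.

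Four of the summands are relatively routine. The fixed integer $ab$ has $O(1)$ prime divisors, and $\pi(\xi)\le\xi=\log\log x\ll \frac{\log x}{\log\log x}$. For the primes failing (iii): the functions $u\mapsto u^{1/2}/\log u$ and $u\mapsto u^{2/3}/\log u$ are increasing for $u$ large, so for $p\le t$ the inequalities $\ell_a(p)\le p^{1/2}/\log p$, $\ell_b(p)\le p^{1/2}/\log p$, $\ell_{a,b}(p)\le p^{2/3}/\log p$ respectively force $\ell_a(p),\ell_b(p)\le t^{1/2}/\log t$ and $\ell_{a,b}(p)\le t^{2/3}/\log t$; Lemma~\ref{lem:matthews} (applied with $k=1$ to $a$ and to $b$, and with $k=2$ to the pair $a,b$) then bounds this summand by $O(t/(\log t)^2) + O(t/(\log t)^{3/2})\ll t/(\log t)^{3/2}$. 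For the primes failing (ii) I would estimate $\sum_{q>\xi}\pi(t;q^2,1)$ exactly as in the proof of Theorem~\ref{thm:basic}: Brun--Titchmarsh for $\xi<q\le t^{1/4}$, the bound $\#\{p\le t: q^2\mid p-1\}\le t/q^2$ for $q>t^{1/4}$, and $\sum_{q>\xi}q^{-2}\ll 1/\xi$, which together give $\ll \frac{t}{\xi\log t}+t^{3/4}\ll \frac{t}{\log t\log\log x}+\frac{t}{(\log t)^{3/2}}$.

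The crux is the count of primes $p\le t$ failing (i), that is, those whose $\xi$-smooth part exceeds $\exp(\sqrt{\log p})$; here one must work uniformly in $t$. I would split at $p=\log x$. The primes $p\le\log x$ contribute at most $\pi(\log x)\ll\frac{\log x}{\log\log x}$. For $\log x<p\le t$, cover the range by dyadic blocks $(P,2P]$; a prime $p$ in such a block failing (i) has $p-1$ (which is $\le 2P$) divisible by a $\xi$-smooth number exceeding $\exp(\sqrt{\log P})$, so Lemma~\ref{lem:HT}, applied with $x$ there replaced by $2P$ and with $z=\xi$, $y=\exp(\sqrt{\log P})$, bounds the number of such $p$ by $\ll P\exp(-c_2\sqrt{\log P}/\log\log\log x)$. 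Because $P\ge\log x$, the increment $\sqrt{\log(2P)}-\sqrt{\log P}$ is $\ll 1/\sqrt{\log\log x}$, so the ratio of the bound for $(2P,4P]$ to that for $(P,2P]$ equals $2\exp(-c_2(\sqrt{\log(2P)}-\sqrt{\log P})/\log\log\log x)$, which tends to $2$ as $x\to\infty$ uniformly in $P$, hence exceeds $\tfrac32$ once $x$ is large. Consequently the sum of these dyadic bounds is $\ll$ its largest term, so it is $\ll t\exp(-c_3\sqrt{\log t}/\log\log\log x)$ for a suitable $c_3>0$. It remains to check that this is $\ll t/(\log t)^{3/2}$ for $\log x\le t\le x$. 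Writing $L=\log t$, the function $L\mapsto c_3\sqrt L-\tfrac32(\log\log\log x)\log L$ is increasing for $L\ge\log\log x$ (its critical point is $\asymp(\log\log\log x)^2=o(\log\log x)$), so its value at $L=\log\log x$, namely $c_3\sqrt{\log\log x}-\tfrac32(\log\log\log x)^2$, is nonnegative for all large $x$; hence $c_3\sqrt{\log t}\ge\tfrac32(\log\log\log x)(\log\log t)$ throughout $\log x\le t\le x$, which gives $t\exp(-c_3\sqrt{\log t}/\log\log\log x)\le t/(\log t)^{3/2}$. Summing the five bounds yields the lemma.

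The step I expect to be the main obstacle is the analysis of condition (i). One cannot apply Lemma~\ref{lem:HT} with a single smoothness threshold, because the threshold $\exp(\sqrt{\log p})$ genuinely depends on $p$ and, near $p=\log x$, is much too small for Lemma~\ref{lem:HT} to beat the target quantity $\asymp t/\log t$ when $t$ is close to $x$. The dyadic decomposition lets one use the far larger threshold $\exp(\sqrt{\log P})$ on each block $(P,2P]$, and the observation that the resulting bounds grow geometrically ensures that their sum is governed by the top term, with no loss coming from the number of blocks — these two points are what make the uniformity in $t$ work.
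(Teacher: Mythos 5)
Your proof is correct and follows essentially the same route as the paper's: split off the primes up to $\log x$ (absorbed by $\log x/\log\log x$), treat the failures of (i) via a dyadic decomposition and Lemma \ref{lem:HT}, the failures of (ii) via Brun--Titchmarsh exactly as in Theorem \ref{thm:basic}, and the failures of (iii) via Lemma \ref{lem:matthews}. The only (harmless) variations are that you handle (iii) by monotonicity of $u^{1/2}/\log u$ and $u^{2/3}/\log u$ rather than dyadically, and you sum the raw dyadic bounds for (i) as a geometric series before converting to a power of $\log t$, whereas the paper converts each block's bound to $T_j/(\log T_j)^2$ first; both bookkeeping schemes are valid.
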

\begin{proof} Let $t_0:=\log{x}$. If $t \le t_0$, then $\pi_U(t) \le \pi(t_0) \ll \log{x}/\log\log{x}$, and the lemma  holds. So we will assume that $t > t_0$. Now let $p$ be a prime in $U$ with $p > t_0$. Since $x$ is large, $p > t_0 > |ab|$, and so $p\nmid ab$. Also, $p>t_0 > \xi$. So it must be that one of (i)—(iii) fails.

Let count how many $p \in (t_0,t]$ are such that (i) fails. Since $p > t_0$, the prime $p$ belongs to some interval $(T_j, 2T_j]$, where $T_j = 2^j t_0$ and $j$ is a nonnegative integer. Since (i) fails, the $\xi$-smooth part of $p-1$  exceeds $\exp(\sqrt{\log p})$, which in turn exceeds $\exp(\sqrt{\log T_j})$. But the number of $p\le 2T_j$ for which this occurs is, by Lemma \ref{lem:HT},
\[ \ll 2T_j \exp(-c_2 \sqrt{\log T_j}/\log \xi) \ll T_j \exp(-c_2\sqrt{\log T_j}/\log\log\log x). \]
Since $T_j \ge t_0$, we have $\log\log{T_j} \ge \log\log\log{x}$, and so the last displayed expression is
\[ \ll T_j \exp(-c_2 \sqrt{\log T_j}/\log \log T_j) \ll T_j/(\log T_j)^2. \]
Now we sum on nonnegative integers $j\ge 0$, stopping once the intervals $(T_j,2T_j]$ cover $(t_0,t]$. This gives an upper bound of $O(t/(\log{t})^2)$ on the number of failures of (i).

Condition (ii) is easier to deal with. By Brun--Titchmarsh, the number of $p \in (t_0,t]$ for which (ii) fails is at most
\begin{align*} \sum_{q > \xi} \pi(t;q^2,1) &\le \sum_{\xi < q < t^{1/4}} \pi(t;q^2,1) + \sum_{q > t^{1/4}}\frac{t}{q^2}
\\ &\ll \frac{t}{\log{t}} \sum_{q > \xi} \frac{1}{q^2} + t^{3/4} \ll \frac{t}{\log{t} \cdot \log\log{x}} + t^{3/4}.  \end{align*}

Consider finally the $p \in (t_0,t]$ where (iii) fails. Again, say that $p \in (T_j, 2T_j]$, where $T_j = 2^j t_0$. Then either $\ell_a(p) \le (2T_j)^{1/2}/\log(2T_j)$, $\ell_b(p) \le (2T_j)^{1/2}/\log(2T_j)$, or $\ell_{a,b}(p) \le (2T_j)^{2/3}/\log(2T_j)$. By Lemma \ref{lem:matthews}, the number of $p$ satisfying any of these conditions is $O(T_j/\log(2T_j)^{3/2})$. Summing on $j$ shows that the number of such $p \in (t_0,t]$ is $O(t/(\log{t})^{3/2})$.

Collecting estimates, the number of $p \in (t_0,t]$ belonging to $U$ is
\[ \ll \frac{t}{(\log{t})^{3/2}} + \frac{t}{\log{t}\cdot\log\log{x}}.  \]
Since there  $O(\log{x}/\log\log{x})$ primes not exceeding $t_0$, the lemma follows.
\end{proof}

We now turn to estimating $\pi_V(t)$. Clearly, $\pi_V(t)=0$ if $t \le \xi$.

\begin{lem}\label{lem:piVbound} For all large $x$, and uniformly for $\xi < t\le x$,
\[ \pi_V(t) \ll \epsilon(t')^{1/12} \cdot \frac{t}{\log{t}}, \]
where $t'=t/\log{t}$.
\end{lem}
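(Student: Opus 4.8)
The plan is to deduce from the definition of $V$ that, for $p\in V$, the shifted prime $p-1$ has a divisor lying in a short interval near $p^{1/3}$, and then to bound the number of such $p$ by combining a dyadic decomposition with Ford's Proposition~\ref{prop:ford}. The role of the lower bound \eqref{eq:epsilonlower} on $\epsilon$ will be to guarantee that the resulting saving is genuinely a fixed positive power of $\epsilon$.

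First I would set up the divisor. For $p\in V$ put $d=\ell_{a,b}(p)$; here $p\nmid ab$ since $p\notin U$, so $d$ makes sense, and $d$ is the order of a subgroup of the cyclic group $\F_p^{\times}$, so $d\mid p-1$. Moreover $d>p^{2/3}/\log p>\sqrt{p}$ because $p>\xi$ is large. Hence the complementary divisor $d':=(p-1)/d$ satisfies $d'<\sqrt{p}$, and the inequalities $p^{2/3}/\log p<d\le p^{2/3+5\epsilon(p)}$ turn into
\[ (p-1)\,p^{-2/3-5\epsilon(p)}\ \le\ d'\ <\ p^{1/3}\log p. \]
Next I would break the primes $p\le t$ into dyadic blocks $(T,2T]$ with $T=2^{j}\ge\xi$ (legitimate since $\pi_V(\xi)=0$), and note --- using that $\epsilon$ is weakly decreasing --- that for every $p\in V\cap(T,2T]$ the divisor $d'$ falls in the single interval $(y_T,z_T]$, where
\[ y_T=\tfrac12\,T\,(2T)^{-2/3-5\epsilon(T)},\qquad z_T=(2T)^{1/3}\log(2T). \]

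Then I would write $z_T=y_T^{1+u_T}$ and estimate $u_T$. A short computation gives $\log y_T=(\tfrac13-5\epsilon(T))\log T+O(1)\asymp\log T$ (valid for $T$ large, since $\epsilon(T)\to 0$) and $\log(z_T/y_T)=5\epsilon(T)\log T+\log\log T+O(1)$. This is where \eqref{eq:epsilonlower} enters: it forces $\epsilon(T)\log T\gg\log\log T$, so that $u_T\asymp\epsilon(T)$. Once $x$, hence $\xi$, hence every $T\ge\xi$, is large, the hypotheses of Proposition~\ref{prop:ford} are readily verified (namely $2T\ge 10^5$, $y_T\ge 10^5$, $y_T\le\sqrt{2T}$, and $2y_T\le z_T\le y_T^2$). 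Applying it with ``$x$'' there taken to be $2T$, and discarding the bounded factor $(\log(2/u_T))^{-3/2}\le 1$, I would obtain
\[ \#\bigl(V\cap(T,2T]\bigr)\ \ll\ \frac{T}{\log T}\,u_T^{\eta}\ \ll\ \frac{T}{\log T}\,\epsilon(T)^{\eta}\ \le\ \frac{T}{\log T}\,\epsilon(T)^{1/12}, \]
the last inequality using $\eta>\tfrac1{12}$ and $0<\epsilon(T)<1$.

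Finally I would sum over the dyadic $T\in[\xi,t]$. With $t'=t/\log t$: the blocks with $T\ge t'$ satisfy $\epsilon(T)\le\epsilon(t')$ by monotonicity, and since $\sum_{2^{j}\le t}2^{j}/j\ll t/\log t$ they contribute $\ll\epsilon(t')^{1/12}\,t/\log t$; the blocks with $T<t'$ contribute at most $\ll t'/\log t'\ll t/(\log t)^{2}$, which is again $\ll\epsilon(t')^{1/12}\,t/\log t$ because $\epsilon(t')^{1/12}\gg(\log\log\log t)^{-1/12}\gg 1/\log t$ by \eqref{eq:epsilonlower}. Adding the two contributions gives the stated bound on $\pi_V(t)$. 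The main obstacle is the bookkeeping in the third paragraph: pinning down that $u_T$ is of exact order $\epsilon(T)$ --- so that the oscillation of the $\log\log$ terms does not eat the saving --- and checking Ford's hypotheses uniformly over the relevant $T$; both become routine given \eqref{eq:epsilonlower} and the freedom to take $x$ large.
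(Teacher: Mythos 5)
Your argument is correct and is essentially the paper's: the key step in both is that for $p\in V$ the complementary divisor $(p-1)/\ell_{a,b}(p)$ lies in a short interval around $p^{1/3}$, so Proposition \ref{prop:ford} applies with $u\ll\epsilon$, and \eqref{eq:epsilonlower} ensures the resulting saving is not swamped by logarithmic losses. The only differences are organizational: the paper restricts to $p\in(t/\log t,t]$ in a single step (rather than dyadically) and splits $V$ into the subcases $\ell_{a,b}(p)\le p^{2/3}$ and $\ell_{a,b}(p)>p^{2/3}$, applying Ford to two separate intervals, whereas you use one interval per dyadic block and absorb the $\log$-factor slack directly into $u_T$ via \eqref{eq:epsilonlower} --- both routes work.
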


\begin{proof} Suppose that $p \in V$ with $t/\log{t} < p \le t$.

If $p^{2/3}/\log{p} < \ell_{a,b}(p) \le p^{2/3}$, then $d:=\frac{p-1}{\ell_{a,b}(p)}$ is a divisor of $p-1$ with
\[ t^{1/3}/\log{t} < \frac{1}{2}p^{1/3} \le d < p^{1/3}\log{p} \le t^{1/3}\log{t}. \]
By Proposition \ref{prop:ford}, the number of primes $p \le t$ for which $p-1$ has a divisor in the interval $(t^{1/3}/\log{t},t^{1/3}\log{t}]$ is $O(\pi(t) (\log{t})^{-\eta})$.

On the other hand, if
$p^{2/3} < \ell_{a,b}(p) \le p^{2/3+5\epsilon(p)}$, then $d:=\frac{p-1}{\ell_{a,b}(p)}$ satisfies
\[ \frac{1}{2} t'^{1/3 - 5\epsilon(t')} < \frac{1}{2} p^{1/3-5\epsilon(p)} \le d < p^{1/3} \le t^{1/3}, \]
where $t':=t/\log{t}$. We now apply Proposition \ref{prop:ford} with $y = \frac{1}{2} t'^{1/3 - 5\epsilon(t')}$ and $z=t^{1/3}$. A short calculation, keeping in mind \eqref{eq:epsilonlower}, shows that $z=y^{1+u}$ with $u \le 16\epsilon(t')$. Hence, the number of $p\le t$ for which $p-1$ has a divisor in $(y,z]$ is $O(\pi(t) \epsilon(t')^{\eta})$.

We conclude that the number of $p \le t$ belonging to $V$ is
\[ \ll \pi(t/\log{t}) + \pi(t) (\log{t})^{-\eta} + \pi(t) \epsilon(t')^{\eta}. \]
By \eqref{eq:epsilonlower}, the final summand dominates. The lemma follows upon noting that $\eta > \frac{1}{12}$.
\end{proof}

For each natural number $n\le x$, we let $n_U, n_V, n_W$ be the largest divisors of $n$ supported on the primes in $U, V, W$, respectively. Thus, $n=n_U n_V n_W$.

At the cost of excluding $o(x)$ values of $n\le x$, we can assume that $n/n_U$ is squarefree. Indeed, since $U$ contains all primes up to $\xi$, if this condition fails then $n$ is divisible by $p^2$ for some $p > \xi$, and the number of such $n$ is $O(x\sum_{p>\xi} p^{-2}) = O(x/\xi)$, which is $o(x)$.

Next, we use a first-moment argument to show we can assume, with $o(x)$ exceptions, \[ n_U \le \exp(\log{x}/(\log\log{x})^{1/2}).\]
With $\Lambda(d)$ the von Mangoldt function,
\[ \sum_{n \le x} \log n_U = \sum_{n \le x} \sum_{d \mid n_U} \Lambda(d) = \sum_{d=p^k,~p \in U} \log{p} \sum_{\substack{n \le x \\ d \mid n}}1 \le x \sum_{p \in U} \frac{\log{p}}{p} + O(x). \]
Partial summation, together with the estimate of Lemma \ref{lem:piUbound} for $\pi_U(t)$, shows that
\[ \sum_{p \in U} \frac{\log{p}}{p} \ll \frac{\log{x}}{\log\log{x}}, \]
and thus $\sum_{n\le x} \log n_U \ll x \log{x}/\log\log{x}$. Consequently, the number of $n\le x$ for which $\log n_U > \log{x}/(\log\log{x})^{1/2}$ is $O(x/\sqrt{\log\log{x}})$, which is $o(x)$.

Lemma \ref{lem:piVbound} implies, via partial summation, that $\sum_{p \in V} \frac{\log{p}}{p} = o(\log{x})$, as $x\to\infty$. So by an argument analogous to that in the last paragraph, we have
\[ n_V \le x^{1/10} \]
for all but $o(x)$ values of $n\le x$.

We will also assume that
\[ n \ge x^{1/2}, \]
that
\[ \lambda(n) \ge n \exp(-(\log\log{x})^3), \]
and that
\[ \omega(n) \le 2\log\log{x}.\]
That the first condition excludes only $o(x)$ values of $n\le x$ is clear. That the same holds for the second condition follows from Lemma \ref{lem:lambda}. That the third condition admits only $o(x)$ exceptions is a consequence of as well-known theorem of Hardy and Ramanujan.

Since $n\ge x^{1/2}$, while $n_U n_V \le x^{1/5}$ (say) once $x$ is large, our assumptions imply that
\[ n_W = \frac{n}{n_U n_V} \ge x^{3/10} \]

Suppose that $x$ is large, and that the natural number $n\le x$ is not in any of the exceptional sets indicated so far.  By Proposition \ref{prop:KR},
\[ \ell_a(n)\ell_b(n)\ell_{ab}(n) \ell_{a^2 b}(n) \ell_{ab^2}(n) \ge \exp(-5(\log\log{x})^3) \prod_{p \mid n_V n_W} \ell_a(p)\ell_b(p)\ell_{ab}(p) \ell_{a^2 b}(p) \ell_{ab^2}(p). \]
The argument used in the proof of Theorem \ref{thm:basic}, now using our modified conditions (i)--(iii), shows that for each $p$ dividing $n_V$,
\[ (\ell_a(p)\ell_b(p)\ell_{ab}(p) \ell_{a^2 b}(p) \ell_{ab^2}(p))^{1/4} \ge \ell_{a,b}(p)/\exp(\sqrt{\log{p}}), \]
and thus (for large $x$)
\[ \ell_a(p)\ell_b(p)\ell_{ab}(p) \ell_{a^2 b}(p) \ell_{ab^2}(p) \ge p^{8/3}/\exp(5\sqrt{\log{p}}).  \]
Using the better lower bound for $\ell_{a,b}(p)$ available for $p$ in $W$, the same argument shows that for each $p$ dividing $n_W$,
\[ \ell_a(p)\ell_b(p)\ell_{ab}(p) \ell_{a^2 b}(p) \ell_{ab^2}(p) \ge p^{8/3 + 20\epsilon(p)}/\exp(4\sqrt{\log p}).\]
Substituting back above the results of the last two displays, and using that $\epsilon(p) \ge \epsilon(n)$ for each $p$ dividing $n$, we find that
\begin{multline*} \ell_a(n)\ell_b(n)\ell_{ab}(n) \ell_{a^2 b}(n) \ell_{ab^2}(n) \ge \exp(-5(\log\log{x})^3) \cdot \exp\bigg(-5\sum_{p\mid n_V n_W} \sqrt{\log{p}}\bigg) \\
\times \prod_{p \mid n_V n_W} p^{8/3}  \prod_{p \mid n_W} p^{20\epsilon(n)}.  \end{multline*}
Recall that $n_V n_W= n/n_U$ is squarefree. Hence,
\begin{align*} \prod_{p \mid n_V n_W} p^{8/3} \prod_{p \mid n_W} p^{20\epsilon(n)} &= (n_V n_W)^{8/3} n_W^{20 \epsilon(n)} \\
&= (n^{8/3}/n_U^{8/3}) \cdot {n_W}^{20\epsilon(n)}\\
&\ge n^{8/3} \exp(-3\log{x}/(\log\log{x})^{1/2}) \cdot x^{6\epsilon(n)} \\
&\ge n^{8/3 + 6\epsilon(n)} \exp(-3\log{x}/(\log\log{x})^{1/2}).
 \end{align*}
Moreover,
\[ \exp\bigg(-5\sum_{p \mid n_V n_W} \sqrt{\log{p}}\bigg) \ge \exp(-5\omega(n)\sqrt{\log{x}}) \ge \exp(-10\sqrt{\log x}\log\log{x}). \]

Putting together our estimates, we find that (for large $x$)
\[ \ell_a(n)\ell_b(n)\ell_{ab}(n) \ell_{a^2 b}(n) \ell_{ab^2}(n) \ge n^{8/3+5\epsilon(n)}\cdot (n^{\epsilon(n)}  \exp(-4\log{x}/(\log\log{x})^{1/2})). \]
Using that $n \ge x^{1/2}$ along with the lower bound \eqref{eq:epsilonlower}, we see that the parenthesized right-hand factor is larger than $1$ (for large $x$). So taking fifth roots, the geometric mean of $\ell_a(n)$, $\ell_b(n)$, $\ell_{ab}(n)$,  $\ell_{a^2 b}(n)$, and $\ell_{ab^2}(n)$ exceeds $n^{8/15 + \epsilon(n)}$.

\begin{rmk} An analogous argument will establish the following analogue of Theorem \ref{thm:general}. Let $\epsilon(x)$ be a positive-valued function of $x\ge 1$ that tends to $0$ as $x\to\infty$. Let $a_1,\dots,a_k$ be nonzero integers that are multiplicatively independent. Let $N$ be a positive integer, and let
\[ \Aa = \{a_1^{e_1} a_2^{e_2} \cdots a_k^{e_k}: \text{each }0 \le e_i < N, \text{not all $e_i=0$}\}. \]
For almost all $n$, there is an $a\in \Aa$ with
\[ \ell_a(n) > n^{\delta' + \epsilon(n)}, \]
where $\delta'$ is defined as in \eqref{eq:deltaprimedef}.
\end{rmk}

\bibliographystyle{amsplain}
\bibliography{nearprimitive}

\end{document}